\title{On Gromov's Method of Selecting Heavily Covered Points}
\def\kamitisymb{{\rm a}}
\def\ethsymb{{\rm b}}
\def\snfsymb{{\rm c}}
\author{
{\sc Ji\v{r}\'{\i} Matou\v{s}ek}$^{\kamitisymb, \ethsymb}$
\and
{\sc Uli Wagner}$^{\ethsymb,\, \snfsymb}$
}
\newcommand{\cmt}[1]{\ifhmode\newline\fi{\sf *** \ \ #1 \\}}
\newtheorem{theorem}{Theorem}
\newtheorem{definition}[theorem]{Definition}
\newtheorem{prop}[theorem]{Proposition}
\newtheorem{obs}[theorem]{Observation}
\newtheorem{lemma}[theorem]{Lemma}
\theoremstyle{definition}
\newtheorem{example}[theorem]{Example}
\newcommand{\heading}[1]{\vspace{1ex}\par\noindent{\bf #1}}
\newcommand{\ProofEndBox}{{\ifhmode\unskip\nobreak\hfil\penalty50 \else
          \leavevmode\fi\quad\vadjust{}\nobreak\hfill$\Box$
            \finalhyphendemerits=0 \par}}
\newcommand{\proofend}{\ProofEndBox\smallskip}
\newcommand{\R}{{\mathbb{R}}}
\newcommand{\Z}{{\mathbb{Z}}}
\newcommand{\s}{\mathbb{S}}
\newcommand\eps{\varepsilon}
\newcommand{\lk}{\mathop {\rm lk}\nolimits}
\def\:{\colon}
\long\def\onefigure#1#2{%  #1 picture,  #2  caption
\begin{figure*}[tbp]
\begin{center}
#1
\end{center}
\caption{#2}
\end{figure*}
}
\def\immediateFigure#1{%
\smallskip\begin{center}#1\end{center}\smallskip }
\newcommand{\labfig}[2]  % labeled figure
{\onefigure{\mbox{\includegraphics{#1}}}{\label{f:#1} #2} }
\newcommand{\labfigw}[3]  % labeled figure with prescribed width
{\onefigure{\mbox{\includegraphics[width=#2]{#1}}}{\label{f:#1} #3}}
\newcommand{\immfig}[1]  % immediate figure
{\immediateFigure{\mbox{\includegraphics{#1}}}}
\newcommand{\immfigw}[2] % immediate figure with prescribed width
{\immediateFigure{\mbox{\includegraphics[width=#2]{#1}}}}
\newcommand\conjge{{\,\stackrel{??}{\ge}\,}}
\newcommand{\cocyc}[1]{\mathcal{Z}^d}
\newcommand{\cofilling}{cofilling}
\newcommand{\Cofilling}{Cofilling}
\newcommand{\caff}{c}%{c^\textup{aff}}
\newcommand{\ctop}{c^\textup{top}}
\begin{document}

\maketitle
{\renewcommand\thefootnote{\kamitisymb}
\footnotetext{Department of Applied Mathematics and Institute of Theoretical Computer Science (ITI),
Charles University, Malostransk\'{e} n\'{a}m. 25,
118~00~~Praha~1,  Czech Republic}
}
{\renewcommand\thefootnote{\ethsymb}
\footnotetext{Institute of  Theoretical Computer Science,
ETH Z\"urich, 8092 Z\"urich, Switzerland}
}
{\renewcommand\thefootnote{\snfsymb}
\footnotetext{Research supported by the Swiss National Science Foundation (SNF Projects 200021-125309 and 200020-125027)}
}

\begin{abstract} 
A result of Boros and F\"uredi ($d=2$) and
of B\'ar\'any (arbitrary $d$) asserts that for every $d$
there exists $c_d>0$ such that for every $n$-point set  $P\subset \R^d$,
some point of  $\R^d$ is covered by at least
$c_d{n\choose d+1}$ of the $d$-simplices spanned by the points of~$P$.
The largest possible value of $c_d$ has been the subject of ongoing 
research. Recently Gromov improved the existing lower bounds
considerably by introducing a new, topological proof method.

We provide an exposition of the combinatorial component of
Gromov's approach, in terms accessible to combinatorialists
and discrete geometers, and we investigate the limits of his method.

In particular, we give tighter bounds on the \emph{cofilling profiles}
for the $(n-1)$-simplex. These bounds yield a minor improvement
over Gromov's lower bounds on $c_d$ for large $d$, but they also show
that the room for further improvement through the {\cofilling} profiles alone
is quite small. We also prove a slightly better lower bound
for $c_3$ by an approach using an additional structure
besides the {\cofilling} profiles. We formulate a
combinatorial extremal problem whose solution 
might perhaps lead to a tight lower bound for~$c_d$.
\end{abstract}

\section{Introduction}
Let $P\subset \R^2$ be a set of $n$ points in general position (i.e., no three points collinear). Boros and F\"uredi~\cite{BorosFuredi:PlanarSelectionLemma-84} showed that there always exists a point $a\in\R^2$ contained in a positive fraction of all the $\binom{n}{3}$ triangles spanned by $P$, 
namely, in at least $\frac{2}{9}\binom{n}{3} - O(n^2)$ triangles.
(Generally we cannot assume $a\in P$, as the example of points
in convex position shows.)

This result was generalized by B\'ar\'any~\cite{Barany:GeneralizationCarathoedory-1982} to point sets in arbitrary fixed dimension:

\begin{theorem}[\textbf{B\'ar\'any~\cite{Barany:GeneralizationCarathoedory-1982}}]
\label{thm:Barany}
Let $P$ be a set of $n$ points in general position in $\R^d$ (i.e., no $d+1$ or fewer of the points are affinely dependent). Then there exists a point in $\R^d$ that is contained in at least 
$$ \caff_d \cdot \binom{n}{d+1} - O(n^d)$$
$d$-dimensional simplices spanned by the points in $P$, where the constant $\caff_d>0$ (as well as the constant implicit in the $O$-notation) depend only on $d$. 
\end{theorem}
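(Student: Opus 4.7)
The plan is to reduce the statement to two classical tools in combinatorial convexity. First, I would apply Tverberg's theorem to $P$ in order to obtain a partition $P=T_1\cup\cdots\cup T_r$ with $r=\lceil n/(d+1)\rceil$ parts whose convex hulls share a common point $c\in\R^d$; that is, $c\in\conv(T_i)$ for every $i$.

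Next, for each $(d+1)$-element subset $\{i_1,\dots,i_{d+1}\}$ of $\{1,\dots,r\}$, I would apply B\'ar\'any's colored Carath\'eodory theorem with the Tverberg parts $T_{i_1},\dots,T_{i_{d+1}}$ as color classes. Since $c$ lies in the convex hull of each class, one obtains a rainbow $(d+1)$-subset $\{q_1,\dots,q_{d+1}\}$ of $P$---one vertex from each of the chosen parts---whose convex hull contains $c$. Simplices arising from different subsets of parts are necessarily distinct, because the vertices of such a rainbow simplex uniquely determine the $d+1$ parts it meets.

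This produces at least $\binom{r}{d+1}$ distinct $d$-simplices through $c$, and a short calculation gives
\[
\binom{r}{d+1}\ge\tfrac{1}{(d+1)^{d+1}}\binom{n}{d+1}-O(n^d),
\]
which proves the theorem with the explicit lower bound $\caff_d\ge 1/(d+1)^{d+1}$, the $O(n^d)$ error term absorbing boundary effects and divisibility issues in the passage from $r$ to $n/(d+1)$.

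The main obstacle is not in the counting step above, which is routine once the tools are granted, but rather in the two tools themselves: Tverberg's theorem is a substantial result, and the colored Carath\'eodory theorem is the key combinatorial innovation in B\'ar\'any's paper. Moreover, the constant $1/(d+1)^{d+1}$ that falls out of this scheme is noticeably lossy---already in the plane it yields $\caff_2\ge 1/27$, whereas Boros and F\"uredi establish the sharp value $\caff_2=2/9$. Closing this gap requires either sharper existence theorems (such as those underlying Gromov's topological approach, which is the subject of the present paper) or a more refined counting argument that exploits the possibility of many rainbow simplices through $c$ per choice of $d+1$ Tverberg parts.
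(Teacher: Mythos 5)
This theorem is cited from B\'ar\'any's 1982 paper and is not proved in the present paper, so there is no ``paper's own proof'' to compare against; but your argument is a correct proof and is in fact precisely B\'ar\'any's original route (and the standard textbook one): a Tverberg partition into roughly $n/(d{+}1)$ parts with a common point $c$, followed by the colored Carath\'eodory theorem applied to each $(d{+}1)$-subset of parts to produce a rainbow simplex through $c$, the distinctness of these simplices being immediate from the partition structure. The one small quantitative point worth flagging: the present paper attributes to B\'ar\'any the constant $\caff_d \geq (d{+}1)^{-d}$, which is a factor $d{+}1$ better than the $(d{+}1)^{-(d{+}1)}$ that falls out of the counting you describe; this extra factor does not come for free from the bare Tverberg-plus-colored-Carath\'eodory count $\binom{r}{d+1}$ with $r\approx n/(d{+}1)$, and requires a further refinement over your version. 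Since Theorem~\ref{thm:Barany} only asserts the existence of some positive constant, this discrepancy does not affect the validity of your proof, and you correctly note that the constant obtained this way is lossy (already giving only $1/27$ in the plane versus the sharp $2/9$), which is exactly what motivates the sharper methods that are the paper's subject.
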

%The superscript ``aff'' stands for ``affine'', and its meaning will be explained below. 
The largest possible value of $\caff_d$ has been the subject of ongoing research. With some abuse of notation, we will henceforth denote by $\caff_d$ the largest possible constant for which Theorem~\ref{thm:Barany} holds true.

\paragraph{Upper bounds.} Bukh, Matou\v{s}ek and Nivasch~\cite{BukhMatousekNivasch:StabbingSimplices-2010} showed that\footnote{In \cite{BukhMatousekNivasch:StabbingSimplices-2010}, a different normalization is used, in the sense that simplices are counted in the form $c_d \cdot n^{d+1} - O(n^d)$. This means that the bounds on $c_d$ given in \cite{BukhMatousekNivasch:StabbingSimplices-2010} have to be multiplied by $(d+1)!$ to match our normalization.}
\begin{equation}
\label{eq:lower-bound-c_d}  
\caff_d \leq  \frac{(d+1)!}{(d+1)^{(d+1)}}\sim \frac{\sqrt{2\pi d}}{e^d} = e^{-\Theta(d)}
\end{equation}
by constructing examples of $n$-point sets $P$ in $\R^d$ for which no point in $\R^d$ is contained more than
$(\tfrac{n}{d+1})^{d+1} -O(n^d)$ many $d$-simplices spanned by $P$. 

\paragraph{Lower Bounds.} The result of Boros and F\"uredi says that $\caff_2 \geq 2/9$, which matches the upper bound in (\ref{eq:lower-bound-c_d}). For general $d$, B\'ar\'any's proof yields 
$$\caff_d \geq \frac{1}{(d+1)^d},$$
which is smaller than (\ref{eq:lower-bound-c_d}) by a factor of $d!$. In \cite{Wagner:Thesis-2003}, the lower bound was improved by a factor of roughly $d$ to 
$$\caff_d \geq \frac{d^2 +1}{(d+1)^{d+1}},$$
but this narrowed the huge gap between upper and lower bounds only slightly. Moreover, Bukh, Matou\v{s}ek and Nivasch showed that the method in \cite{Wagner:Thesis-2003} cannot be pushed farther.
An improvement of the lower bound for $c_3$ by a clever elementary geometric
argument was recently achieved by Basit et al.~\cite{Basit-al}.

We refer to \cite{BukhMatousekNivasch:StabbingSimplices-2010} for a more detailed discussion of the problem and related results.

\paragraph{Gromov's results.}
Recently, Gromov~\cite{Gromov:SingularitiesExpandersTopologyOfMaps2-2010} introduced a new, topological proof method, which improves on the previous
 lower bounds considerably and which, moreover, applies to a more 
general setting, described next.

An $n$-point set $P \subseteq \R^d$ determines an affine map $T$ from the $(n-1)$-dimensional simplex $\Delta^{n-1} \subseteq \R^{n-1}$ to $\R^d$ as follows. Label the vertices of $\Delta^{n-1}$ by $V=\{v_1,\ldots, v_n\}$, and the points as $P=\{p_1,\ldots, p_n\}$. Then $T$ is given by mapping $v_i$ to $p_i$, $1\leq i \leq n$, and by interpolating linearly on the faces $\Delta^{n-1}$. 

Thus, B\'ar\'any's result can be restated by saying that for any \emph{affine} map $T \colon \Delta^{n-1} \rightarrow \R^d$, there exists a point in $\R^d$ that is contained in the $\psi$-images of at least $\caff_d \cdot \binom{n}{d+1} -O(n^d)$ many $d$-dimensional faces of $\Delta^{n-1}$.

Gromov shows that, more generally\footnote{In fact, Gromov's approach is still much more general than this and applies to continuous maps from arbitrary finite simplicial complexes $X$ to arbitrary $d$-dimensional manifolds $Y$. The method yields lower bounds for the maximum number of $d$-simplices of $X$ whose images share a common point as long as $X$ has certain \emph{expansion properties}. This will be briefly explained in Section~\ref{subsec:abstract} below.}, the following is true:

\begin{theorem}[\textbf{\cite{Gromov:SingularitiesExpandersTopologyOfMaps2-2010}}]
\label{thm:Gromov-Simplex-Selection}
For an arbitrary \emph{continuous} map $T \colon \Delta^{n-1} \rightarrow \R^d$, there exists a point in $\R^d$ that is contained in the $T$-images of at least $\ctop_d \cdot \binom{n}{d+1}-O(n^d)$  many $d$-faces of $\Delta^{n-1}$, where $\ctop_d>0$ is a constant that depends only on $d$.
\end{theorem}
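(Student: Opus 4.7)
My plan is to follow Gromov's topological strategy, combining a preimage pullback construction with combinatorial cofilling estimates on the simplex. First, I would approximate $T$ by a generic piecewise-linear map $\tilde{T}$ with respect to a sufficiently fine subdivision of $\Delta^{n-1}$, so that for almost every $y\in\R^d$ the preimage $\tilde{T}^{-1}(y)$ is finite and transverse, and the subdivision itself affects at most $O(n^d)$ $d$-faces. The number $N(y)$ of $d$-faces $\sigma$ of $\Delta^{n-1}$ whose image contains $y$ is then bounded below by the weight of the $d$-cochain $\alpha_0\in C^d(\Delta^{n-1};\Z/2)$ defined by $\alpha_0(\sigma)=|\tilde{T}^{-1}(y)\cap\sigma^\circ|\bmod 2$, so it suffices to lower-bound $\mathrm{weight}(\alpha_0)$ for some well-chosen $y$.

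The main topological step is a \emph{cascading preimage} construction. I would choose a generic nested family of half-flats $\{y\}=F_0\subset F_1\subset\cdots\subset F_d\subset\R^d$ with $\dim F_k=k$, whose topological boundaries are arranged so that the resulting cochain identities are nontrivial, and for each $k=0,\ldots,d$ set $\alpha_k\in C^{d-k}(\Delta^{n-1};\Z/2)$, $\alpha_k(\tau)=|\tilde{T}^{-1}(F_k)\cap\tau^\circ|\bmod 2$. A transversality/boundary calculation on the generically $1$-dimensional preimage $\tilde{T}^{-1}(F_k)\cap\sigma^\circ$ for $\dim\sigma=d-k+1$ expresses the finite boundary of this $1$-manifold as a sum of contributions from $\tilde{T}^{-1}(F_{k-1})\cap\sigma^\circ$ and from $\partial\sigma$. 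Reading this mod $2$ yields a cochain relation of the form $\delta\alpha_k=\alpha_{k-1}$, interpreted in a suitably extended complex (encoding the flag data) so as to avoid the trivial consequence $\alpha_0=\delta^2\alpha_2=0$.

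The last step applies the cofilling profile of $\Delta^{n-1}$, which is the central combinatorial tool: it provides, for each $k$, a lower bound on $\mathrm{weight}(\delta\gamma)$ in terms of $\mathrm{weight}(\gamma)$ for $\gamma\in C^{k-1}(\Delta^{n-1};\Z/2)$. By averaging over (or carefully choosing) the flag so that $\alpha_d\in C^0$ has weight $\Omega(n)$---corresponding to a balanced halving of the vertex set by the leading half-flat---and iterating the cofilling lower bounds down the cascade $\alpha_d\to\alpha_{d-1}\to\cdots\to\alpha_0$, one arrives at $\mathrm{weight}(\alpha_0)\ge \ctop_d\binom{n}{d+1}-O(n^d)$ for some $y$, where $\ctop_d$ is an explicit product of cofilling constants across dimensions $1,\ldots,d$.

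The main obstacle will be to set up the cascading preimage construction cleanly: naively $\delta^2=0$ forces $\alpha_0=0$, so one must work in an extended chain complex (e.g., a relative or mapping-cone construction attached to the flag) or introduce orientation/boundary corrections so that the identities actually propagate useful information. A secondary difficulty is that $\ctop_d$ is built as a product of cofilling constants, so any slack at a single codimension level compounds multiplicatively, and obtaining tight cofilling bounds for $\Delta^{n-1}$ is itself a substantial combinatorial problem---precisely the one the paper investigates in its subsequent sections.
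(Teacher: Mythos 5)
Your cascade $\delta\alpha_k=\alpha_{k-1}$ built from a single nested flag cannot carry the information you want, and you acknowledge this: on the simplex $\delta^2=0$, so $\alpha_0=\delta\alpha_1=\delta^2\alpha_2=0$ and the construction returns nothing. The appeal to "an extended chain complex (relative or mapping-cone)" is not a loose end to be tidied up later---it \emph{is} the hard topological step, and as written the proposal has no working replacement for it. Gromov's method (as the paper outlines in Section~\ref{sec:topological-outline}) circumvents this collapse in a specific way: one does not work with a single flag but with an entire triangulation $\mathcal{T}$ of $\R^d$ (compactified to $\s^d$), and one maps the chain complex of $\mathcal{T}$ into the \emph{simplicial set of $d$-cocycles} $\cocyc{d}$ via $A\mapsto\Delta_T(A)$, where the cochains $F_A$ satisfy $\delta F_A=F_{\partial A}$ (Lemma~\ref{lem:intersection-duality}). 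Here the cofilling data defining the $k$-simplices of $\cocyc{d}$ is recorded explicitly, and $\delta^2=0$ is not imposed across the different levels; nontriviality of the resulting $d$-cycle $\mathcal{W}=\Delta_T([\s^d])$ is the content of the Almgren--Dold--Thom theorem, and the contradiction comes from a \emph{coning} argument that contracts $\mathcal{W}$ to a point if all vertex cocycles $F_x$ are small. Your single-flag picture has no analogue of $\mathcal{W}$ or of its homological nontriviality, which is precisely what forces some $F_x$ to be large.

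A secondary mismatch is how the cofilling profile is applied. It does not assert $\|\delta\gamma\|\ge c\,\|\gamma\|$ for arbitrary $\gamma$ (take $\gamma$ a cocycle: $\delta\gamma=0$). Rather, given a coboundary $F$ one chooses a \emph{minimal} cofilling $E$ and uses $\|F\|\ge\varphi(\|E\|)$; in the coning argument of Section~\ref{s:coning} this is applied step by step to minimally chosen cochains $F_{Ao}$, and the $\tfrac{1}{d+1}$ at the innermost level comes from a parity dichotomy for $0$-cocycles (the final sum is either $0$ or all of $V$, forcing one of the $d+1$ facet cochains $F_{Bo}$ to be at least $\tfrac{1}{d+1}$-dense), not from a "balanced halving by a leading half-flat." So even granting some version of your cascade, the way the combinatorics would feed into it is off.
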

By the same abuse of notation as before, $\ctop_d$ will also henceforth denote the largest constant for which Theorem~\ref{thm:Gromov-Simplex-Selection} holds. In particular, $\caff_d \geq \ctop_d$.

Gromov's method gives
\begin{equation}
\label{eq:c_d-top}
\caff_d \geq \ctop_d \geq \frac{2d}{(d+1)! (d+1)}
\end{equation}

For $d=2$, this yields the tight bound $\ctop_2 = \caff_2 =2/9$. 
For general $d$, Gromov's result improves
on the earlier bounds by a factor exponential in $d$, but it
is still of order $e^{-\Theta(d\log d)}$ 
and thus far from the upper bound.

One of the goals of this paper is to provide an exposition of the combinatorial component of Gromov's approach, in terms accessible to combinatorialists and discrete geometers.
\medskip
%%% NEW: KARASEV

Very recently, after a preliminary version of this paper was written and circulated, Karasev~\cite{Karasev:Gromov-2010} found a very short and elegant proof of Gromov's bound (\ref{eq:c_d-top}) for \emph{affine} maps. %Karasev's argument also yields the following colorful analogue.

%\begin{theorem}[\textbf{\cite{Karasev:Gromov-2010}}] Let $V_0,\ldots, V_d\subset \R^d$ be $(d+1)$ sets 
%of $n$ points each such that their union is in general position. Consider the \emph{rainbow $d$-simplices} spanned by these point sets, i.e., simplices of the form $\conv\{v_0,v_1,\ldots, v_d\}$ with $v_i\in V_i$, $0\leq i \leq d$. 
%
%Then there exists a point $o\in \R^d$ that is contained in at least $n^{d+1}/(d+1)!$ many rainbow simplices.
%\end{theorem} 
%
%As noted by B\'ar\'any (see  \cite{Karasev:Gromov-2010}), this theorem also implies the (affine) \emph{Colorful Tverberg Theorem} \cite{BaranyFurediLovasz:HalvingPlanes-90,BaranyLarman:ColoredTverberg-1992,ZivaljevicVrecica:ColoredTverberg-92,BlagojevicMatschkeZiegler:OptimalColoredTverberg-2009} with a weaker bound for the sizes of the color classes, roughly $t(r,d)\sim r(d+1)!(d+1)$.

Karasev's proof, which he himself describes as a ``decoded and refined'' version of Gromov's proof, combines probabilistic and topological arguments, but he avoids the heavy topological machinery applied in Gromov's proof and only uses the elementary notion of the degree of a piecewise smooth map between spheres.

Karasev's argument can be modified and extented so that it covers the case of arbitrary continuous maps into $\R^d$ (but not yet into general $d$-dimensional manifolds). Furthermore, the combinatorial and the topological aspects of the argument, which are intertwined in Karasev's proof, can be split into two independent parts. In this way, any combinatorial improvement on the cofilling profiles or on the pagoda problem introduced in Section~\ref{sec:c3} immediately imples improved bounds for Theorem~\ref{thm:Gromov-Simplex-Selection} also via this simpler topological route. This will be discussed in more detail in a separate note.

\paragraph{Coboundaries and {\cofilling} profiles.} We need two basic notions, \emph{coboundaries} and \emph{cofilling profiles}, which have their roots in cohomology but which can be defined in  elementary and purely combinatorial terms.

Let $V$ be a fixed set of $n$ elements, w.l.o.g., $V=[n]:=\{1,2,\ldots, n\}$. We will always assume that $n$ is sufficiently large. 

In topological terms, we think of the $(n-1)$-dimensional simplex $\Delta^{n-1}$ as a combinatorial object (an abstract simplicial complex), namely, as the system of all subsets of $V$. Thus, given a subset $f \subseteq V$, we will also sometimes refer to $f$ as a \emph{face} of $\Delta^{n-1}$, and the \emph{dimension} of a face is defined as $\dim f:= |f|-1$. 

Let $E\subseteq {V\choose d}$ be a system of (unordered) $d$-tuples,
or in other words,  of $(d-1)$-dimensional faces. 
We write $\|E\|:= |E|/{n\choose d}$ for the \emph{normalized
size} of $E$; one can also interpret $\|E\|$ as
 the probability that a random $d$-tuple lies in~$E$. 
The notation $\|E\|$ implicitly refers to
$d$ and $n$, which have to be understood from the context.

The \emph{coboundary} $\delta E$ is the system of those $(d+1)$-tuples
in $f\in {V\choose d+1}$ that contain an odd number of $e\in E$. 
(For $d=2$, this notion and some of the following considerations
are related to \emph{Seidel switching} and \emph{two-graphs},
which are notions studied in combinatorics---see the end of
Section~\ref{s:seidel}
for an explanation and references.)

We emphasize that
$\delta E$ also depends on the ground set $V$, and sometimes we may write $\delta_V E$ 
instead of $\delta E$ to avoid ambiguities.

Many different $E$'s may have the same coboundary.
We call $E$ \emph{minimal} if $\|E\|\le \|E'\|$ for every $E'$ 
with $\delta E'=\delta E$. 

We define the \emph{{\cofilling} profile}%
\footnote{Gromov uses the notation $\|(\partial^{d-1})^{-1}_{\rm fil}\|(\beta)$ for
what we would write as $\varphi^{-1}_d(\beta)/\beta$.
Actually, he does not take the $\liminf$, which we use in order  
to avoid  dealing with small values of~$n$.} 
as follows:
$$
\varphi_d(\alpha):=\liminf_{|V|=n\to\infty}\min \{\|\delta E\|: E\in {\textstyle \binom{V}{d}} \mbox{ minimal}, \|E\|\ge \alpha\}.
$$

Equivalently, one can also view this notion as follows. Suppose we are given a system $F \in \binom{V}{d+1}$, and we are guaranteed that $F$ is a coboundary, i.e., that $F=\delta E$ for some $E$. Now we want to know the smallest possible
(normalized) size $\|E\|$ of an $E$ with $F=\delta E$, 
as a function of $\|F\|$. It suffices to consider minimal $E$'s, and 
$\varphi(\alpha) \geq \beta$ means that if we are forced to take 
$\| E\| \geq \alpha$, then we must have $\|F\| \geq \beta$. 
%In other words, for any $F$ with $\|F\| < \beta$, we can find an $E$ 
%with $\delta E = F$ and $\|E\| < \alpha$.

We also remark that there is no minimal $E$ with $\|E\| > 1/2$ (see Section~\ref{sec:basics}), so formally, $\varphi_d(\alpha)=\infty$  for $\alpha >1/2$
(since we take the minimum over an empty set).

As a warm-up, let us consider case $d=1$: In this case, we can view an $S\in \binom{V}{1}$ simply as a subset of $S\subseteq V$, and $\partial S$ 
is the set of edges of the complete bipartite graph with
color classes $S$ and $V\setminus S$; in graph theory, 
one also speaks of the \emph{edge cut} determined by $S$
in the complete graph on $V$. 
\immfig{edgecut}
The minimality of $S$
simply means that  $|S| \leq n/2$. It follows that
$\varphi_1(\alpha)=2\alpha(1-\alpha)$, $0\le\alpha\le\frac12$.

For general $d$, the following basic bound for $\varphi_d$ was observed by Gromov, and independently by Linial, Meshulam, and Wallach \cite{LinialMeshulam:HomologicalConnectivityRandom2Complexes-2006,MeshulamWallach:HomologicalConnectivityRandomComplexes-2009} (and maybe by others). In our terminology, it can be stated as follows:

\begin{lemma}[\textbf{Basic {\Cofilling} Bound}] 
\label{lem:basic}
For every $d\geq 1$ and all $\alpha\in [0,1]$,
$$
\varphi_d(\alpha)\ge \alpha.
$$
\end{lemma}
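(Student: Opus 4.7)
The plan is to exhibit, for each vertex $v\in V$, an explicit $(d{-}1)$-cochain $\psi_v\subseteq\binom{V}{d-1}$ that plays the role of a ``test'' coboundary shift, and then combine the minimality of $E$ with a single double count over $V$. The natural choice is the link of $v$ in $E$, recorded as a family of $(d{-}1)$-faces disjoint from $\{v\}$:
\[
\psi_v := \Bigl\{\sigma\in\binom{V\setminus\{v\}}{d-1} : \sigma\cup\{v\}\in E\Bigr\}.
\]
Up to sign conventions this is the image of $E$ under the standard contracting cone operator at $v$ familiar from simplicial cohomology, but no machinery beyond the definitions just introduced is needed.

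The main computation is the identity (over $\mathbb{F}_2$, i.e.\ interpreting sums as symmetric differences of sets)
\[
E\bigtriangleup\delta\psi_v \;=\; \Bigl\{e\in\binom{V\setminus\{v\}}{d} : e\cup\{v\}\in\delta E\Bigr\},
\]
which I would verify by splitting on whether $v\in e$ for $e\in\binom{V}{d}$. If $v\in e$, the only $(d{-}1)$-face of $e$ disjoint from $\{v\}$ is $e\setminus\{v\}$, and it lies in $\psi_v$ precisely when $e\in E$; hence $E$ and $\delta\psi_v$ coincide on the star of $v$, so their symmetric difference contributes nothing there. If $v\notin e$, expanding $(\delta E)(e\cup\{v\})$ as a mod-$2$ sum over the $d+1$ facets of $e\cup\{v\}$ singles out the facet $e$ (contributing $\mathbf{1}[e\in E]$) and leaves a sum over the remaining $d$ facets (each containing $v$) that rewrites, straight from the definition of $\psi_v$, as $(\delta\psi_v)(e)$.

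Given the identity, $\delta\psi_v$ is a coboundary, so $E\bigtriangleup\delta\psi_v$ lies in the same fibre of $\delta$ as $E$, and minimality of $E$ yields
\[
|E|\;\le\;|E\bigtriangleup\delta\psi_v|\;=\;\bigl|\{f\in\delta E : v\in f\}\bigr|.
\]
Summing over the $n$ vertices $v\in V$ and noting that each $f\in\delta E$ is counted exactly $|f|=d+1$ times on the right yields $n|E|\le (d+1)|\delta E|$. Rearranging and normalizing, $\|\delta E\|\ge \tfrac{n}{n-d}\|E\|\ge \|E\|$, and taking $\liminf_{n\to\infty}$ gives $\varphi_d(\alpha)\ge \alpha$.

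The only nontrivial ingredient is the key identity, which is essentially mod-$2$ bookkeeping; I expect the main pitfall to be keeping the two cases ($v\in e$ and $v\notin e$) cleanly separated and tracking that the $d$ ``facets containing $v$'' appearing in the second case match exactly the $(d{-}1)$-subfaces of $e$ that enter into $(\delta\psi_v)(e)$. Everything else is just the definition of minimality applied once per vertex plus the degree count.
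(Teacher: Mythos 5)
Your proof is correct and closely related to the paper's, but it runs the argument in the opposite direction, and the packaging is different enough to be worth contrasting. Both proofs hinge on one identity about links. The paper starts from a coboundary $F$, checks directly that $\lk(v,F)$ \emph{is} a cofilling of $F$ (i.e.\ that $\delta\lk(v,F)=F$, using $\delta F=0$), and then selects a single vertex of below-average degree in $F$, yielding a cofilling of normalized size at most $\|F\|$; from this the minimal cofilling is at most $\|F\|$, hence $\varphi_d(\alpha)\ge\alpha$. You instead start from a minimal $E$ and first prove the identity $E\bigtriangleup\delta\lk(v,E)=\lk(v,\delta E)$, valid for \emph{any} $E$; this specializes to the paper's computation upon taking $E$ with $\delta E=0$, and it is precisely equation~(\ref{e:link-cobo}) of the paper in disguise. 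Minimality applied once per vertex then gives $|E|\le|\lk(v,\delta E)|=|(\delta E)_v|$, and you double-count over all of $V$ rather than isolating the best vertex; the two routes produce the identical inequality $\|\delta E\|\ge\tfrac{n}{n-d}\|E\|$. Your version is more symmetric and sidesteps the intermediate claim that the link cofills. One caveat: for $d=1$ your $\psi_v$ is a $(-1)$-cochain, and the paper deliberately decrees that $\delta$ vanishes there (unreduced cohomology), in which case both your identity and the minimality step $|E|\le|E\bigtriangleup\delta\psi_v|$ degenerate to a tautology. Your case analysis implicitly uses the reduced convention $\delta\{\varepsilon\}=V$, which does make the bookkeeping work, but this should be flagged---or the $d=1$ case simply dispatched via the exact formula $\varphi_1(\alpha)=2\alpha(1-\alpha)$, as the paper does.
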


We will recall a simple combinatorial proof of this bound, along with other basic properties of the coboundary operator, in Section~\ref{sec:basics}. A simple example shows that the basic bound is attained with equality for
$\alpha=\tfrac{(d+1)!}{(d+1)^{(d+1)}}\approx e^{-(d+1)}$, 
but for smaller $\alpha$, improvements are possible,
as we will discuss later.

In Section~\ref{sec:topological-outline}, we present an outline of topological part of Gromov's argument, which yields the following general lower bound.
\begin{prop}[\textbf{\cite{Gromov:SingularitiesExpandersTopologyOfMaps2-2010}}] 
\label{prop:GromovFillingBaranyConstant}
For every $d\geq 1$,
$$%\begin{equation}\label{e:grr}
\ctop_d\ge \varphi_{d}(\tfrac12 \varphi_{d-1}(\tfrac13 \varphi_{d-2}(\ldots \tfrac 1d \varphi_1(\tfrac1{d+1})\ldots))).
$$%\end{equation}
\end{prop}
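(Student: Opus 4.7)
The plan is to follow Gromov's cascade argument: given a continuous map $T \colon \Delta^{n-1} \to \R^d$, I would construct a sequence of $\Z/2$-valued cochains $F_0, F_1, \ldots, F_d$ on $\Delta^{n-1}$, with $F_k \in C^k$, satisfying $\delta F_k = F_{k+1}$, and then use the cofilling profile at each level to propagate a lower bound on $\|F_0\|$ up to a lower bound on $\|F_d\|$. Since $\|F_d\|\cdot\binom{n}{d+1}$ lower-bounds the number of $d$-faces whose $T$-images share a common point $a\in\R^d$, this yields the desired bound on $\ctop_d$.

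First I would approximate $T$ by a generic PL map (this does not affect $\ctop_d$). Next, choose a generic point $a\in\R^d$ together with a nested family of reference pieces $\{a\}=B_0\subset B_1\subset\cdots\subset B_d$, where $\dim B_j=j$ and $\partial B_j=B_{j-1}$ in an appropriate mod-$2$ sense (e.g., via half-spaces or cones sitting inside an affine flag through $a$). For each $k$-face $\sigma$, define $F_k(\sigma)$ to be the parity of the number of transverse intersections of $T(\sigma)$ with $B_{d-k}$. The relation $\delta F_k=F_{k+1}$ then follows from the standard conservation-of-boundary principle: for each $(k+1)$-face $\sigma$, the set $T^{-1}(B_{d-k})\cap\sigma$ is generically a $1$-manifold whose boundary consists of its trace on $\partial\sigma$ together with $T^{-1}(\partial B_{d-k})\cap\sigma=T^{-1}(B_{d-k-1})\cap\sigma$, and the mod-$2$ count of boundary points vanishes.

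For the base case, a pigeonhole argument over the choice of $a$ and the flag yields $\|F_0\|\geq\tfrac{1}{d+1}$ (the $d+1$ reflects the number of ``sectors'' around $a$ across which the $n$ vertex images are distributed). I would then climb the cascade: at each transition $F_k\to F_{k+1}$, replace $F_k$ by a \emph{minimal} representative $F_k'$ of its coboundary class, so as to legitimately apply the cofilling bound $\|F_{k+1}\|=\|\delta F_k'\|\geq\varphi_{k+1}(\|F_k'\|)$. The loss factor $\tfrac{1}{d+1-k}$ relating $\|F_k\|$ to $\|F_k'\|$---which is precisely the multiplier appearing in front of $\varphi_k$ in the nested formula---would arise by averaging over the $d+1-k$ natural sign/side choices available at this transition of the geometric construction. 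Iterating through all $d$ levels would then produce the claimed nested lower bound on $\|F_d\|$.

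The hard part will be the geometric setup of the pieces $B_j$ with clean mod-$2$ boundary relations (probably requiring careful orientation conventions and some compactification at infinity), and especially the minimization-and-averaging step: one must argue that the minimal representative $F_k'$ really has size at least $\tfrac{1}{d+1-k}\|F_k\|$ for the specific geometric $F_k$ at hand, and that this can be arranged coherently so the whole cascade remains intact. Once these ingredients are in place, the coboundary identity $\delta F_k=F_{k+1}$ and the level-by-level iteration of $\varphi_k$ are comparatively mechanical.
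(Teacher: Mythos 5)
Your proposal follows a genuinely different route from the paper's. The paper's proof is built around the space of cocycles $\cocyc{d}$: one defines a $d$-cycle $\mathcal{W}$ in $\cocyc{d}$ from the triangulation $\mathcal{T}$ of $\R^d$ (or $\s^d$), invokes the Almgren--Dold--Thom theorem as a black box to get non-contractibility of $\mathcal{W}$, and then derives a contradiction by inductively building a \emph{cone} $o\ast\mathcal{W}$: for each $k$-face $A$ of $\mathcal{T}$ one \emph{chooses} a \emph{minimal} cochain $F_{Ao}$ satisfying $\delta F_{Ao}=F_A+\sum_B F_{Bo}$, and the factors $\tfrac12,\tfrac13,\ldots,\tfrac1{d+1}$ arise because a $k$-simplex of $\mathcal{T}$ has $k+1$ facets, so $\max_B\|F_{Bo}\|\ge\tfrac1{k+1}\|\sum_B F_{Bo}\|$. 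By contrast, your scheme produces a single sequence of intersection cochains $F_0,\ldots,F_d$ attached to one nested flag $B_0\subset\cdots\subset B_d$, with no explicit appeal to $\cocyc{d}$, no $\mathcal{W}$, and no ADT. That is much closer in spirit to Karasev's ``decoded and refined'' version mentioned in the introduction.

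The difficulty is that, as written, your scheme has a real gap precisely at the minimization step. To apply the cofilling bound at level $k$ you need a \emph{minimal} preimage of $F_{k+1}$; you propose to replace the geometrically defined $F_k$ by a minimal representative $F_k'$. But minimality only gives $\|F_k'\|\le\|F_k\|$, i.e.\ passing to the minimal representative can only shrink the quantity you want to lower-bound, and there is no reason $F_k'$ should satisfy $\|F_k'\|\ge\tfrac1{d+1-k}\|F_k\|$: one can easily have intersection cochains that are very far from minimal (e.g.\ $F_k$ supported on a coboundary), so no universal multiplicative lower bound of this form holds. The multipliers in the nested formula do not come from comparing a geometric cochain to its minimal representative; they come from the fact that a $k$-simplex has $k+1$ codimension-one faces and one takes a maximum over the $k+1$ corresponding minimal fillers. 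In the paper this is arranged by building the fillers $F_{Ao}$ \emph{from scratch} and \emph{choosing} them minimal (so the cofilling bound applies directly to the chosen minimal cochain, not to a geometrically defined one), and then observing that the final $0$-cocycle $F_A+\sum_B F_{Bo}$ must equal $V$ for some $d$-face $A$, forcing $\|F_{Bo}\|\ge\tfrac1{d+1}$ for some facet $B$. Your single-flag intersection cascade has only one coface at each transition, so there is no ``$k+1$ faces'' to take a maximum over, and the vague ``averaging over $d+1-k$ sign/side choices'' is not a substitute unless you make that averaging precise and show it produces minimal cochains with the claimed sizes. Until that is done there is no valid bridge between $\|F_0\|\ge\tfrac1{d+1}$ and a lower bound on $\|F_d\|$.
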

Theorem~\ref{thm:Gromov-Simplex-Selection} follows from this proposition 
by using $\varphi_1(\alpha)=2\alpha(1-\alpha)$ and the basic bound for 
all $d\ge 2$. 
%In particular $\ctop_2 \geq 2/9$ and $\ctop_3 \geq 1/16=0.0625$. The former is known to be tight, but for the latter, the best know upper bound example (which is an affine example) shows $\ctop_2 \leq \caff_3 \leq 4!/4^4 = 3/32=0.09375$.

Better bounds on the $\ctop_d$ would immediately follow from
Proposition~\ref{prop:GromovFillingBaranyConstant} if
one could improve on the basic cofilling bound 
 in an appropriate range of $\alpha$'s;
this is a purely combinatorial question (and a quite nice one, in
our opinion).

We establish the following lower bounds for $\varphi_2$ and $\varphi_3$:

% on the {\cofilling} profiles of the $(n-1)$-simplex.
\begin{theorem}
\label{thm:cofilling-2} For $d=2$ and all $\alpha\le\frac14$,
 we have the lower bound
$$
\varphi_2(\alpha) \ge \tfrac34\left(1-\sqrt{1-4\alpha}\right)(1-4\alpha)=
\tfrac 32\alpha-\tfrac92\alpha^2-3\alpha^3-O(\alpha^4).
$$
\end{theorem}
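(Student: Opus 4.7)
The plan is to use the identity
\[
|\delta E| = n|E| + 4t(E) - \sum_{v \in V} d(v)^2,
\]
where $d(v)$ denotes the $E$-degree of $v$ and $t(E)$ is the number of triangles in $E$. This identity follows by classifying each triple according to its number of $E$-edges (odd iff in $\delta E$) and counting triples with exactly one $E$-edge as $q = n|E| - \sum_v d(v)^2 + 3t(E)$, which one obtains by summing, over $\{a,b\} \in E$, the number $n - d(a) - d(b) + |N(a) \cap N(b)|$ of isolated third vertices $w \in \bar N(a) \cap \bar N(b) \setminus \{a,b\}$.

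The next step is to extract constraints on the degree sequence from the minimality of $E$ (equivalent to $|E \cap \partial A| \le |\partial A|/2$ for every $A \subseteq V$). Applying minimality at $A = \{v\}$ gives $d(v) \le (n-1)/2$; at $A = N(v)$ it gives the link bound $|L_v| \ge |E|$, where $L_v = E|_{V \setminus v} \triangle \partial_{V\setminus v} N(v)$ is the link of $v$ in $\delta E$. Summing $|L_v| \ge |E|$ over $v$ alone recovers only the basic bound $\varphi_2(\alpha) \ge \alpha$. To sharpen it, I would combine these with minimality at pairs $A = \{u,v\}$ (which forces $d(u) + d(v) \le n - 2$ for $uv \notin E$) and at further structured subsets, in order to upper-bound $\sum_v d(v)^2$. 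Parametrizing the degrees as $\beta_v := d(v)/(n-1) \in [0, 1/2]$ with $\overline{\beta_v} = \alpha$, the normalized identity reads $\|\delta E\| = 3\alpha + 4\tau - 6\,\overline{\beta_v^2} + o(1)$, where $\tau = t(E)/\binom{n}{3}$; the goal is to prove a bound on $\overline{\beta_v^2}$ which, together with a suitable lower bound on $\tau$, yields $\|\delta E\| \ge \tfrac{3}{2}\beta(1-2\beta)^2 + o(1)$ with $\beta$ defined by $\beta(1-\beta)=\alpha$, $\beta \le 1/2$.

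The shape of the claimed bound strongly suggests that the extremal degree profile is a two-atom distribution with a $(2\beta)$-fraction of vertices of degree near $(n-1)/2$ (which, by pair-minimality, must then form a clique of size $\sim 2\beta n$) and the remaining vertices of degree near $0$. This configuration would indeed realize $\overline{\beta_v^2} = \beta/2$ and a triangle count $t(E) \sim \binom{2\beta n}{3}$. The main obstacle is thus a constrained optimization over the degree sequence: one must show that no ``smoother'' profile, satisfying all the minimality inequalities, can produce a smaller $|\delta E|$ than this two-atom extremum. I expect this to proceed by extracting minimality at subsets $A$ adapted to the candidate profile (e.g.\ the current set of high-degree vertices), and by exploiting the non-negative triangle contribution $+4t(E)$ together with the clique structure that pair-minimality forces on the high-degree vertices.
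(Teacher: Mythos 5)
Your starting point is sound: the identity $|\delta E| = n|E| + 4t(E) - \sum_v d(v)^2$ is the same inclusion-exclusion formula the paper uses (eq.\ (7), after rearranging $\sum d(v)(d(v)-1) = \sum d(v)^2 - 2|E|$), and reducing to a constrained optimization over degree sequences is exactly the right move. However, the proposal has two genuine gaps.

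First, and most importantly, you never carry out the optimization. Everything after the identity is a plan (``I would combine\ldots'', ``I expect this to proceed\ldots''). The entire content of the proof is precisely the constrained maximization of $\sum_v d(v)^2$, and the paper devotes a separate lemma (Lemma~\ref{l:lobo2}) to it: it takes a graph with $|E|=\alpha\binom n2$ and $d(v)\le n/2$, and by a sequence of explicit edge-swaps showing each swap cannot decrease $\sum d(v)^2$, drives it to the extremal configuration. That argument is what makes the theorem true, and it is absent from your write-up.

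Second, the extremal profile you describe is wrong, and your own formula refutes it. You posit $2\beta n$ vertices of degree $\approx n/2$ forming a clique with all remaining vertices of degree $\approx 0$. But a clique of $2\beta n$ vertices gives each member degree $2\beta n -1 \ne n/2$; the missing $\sim n/2 - 2\beta n$ edges per clique vertex must land on other vertices, which then cannot have degree $0$. Moreover, plugging your claimed values $\overline{\beta_v^2} = \beta/2$ and $\tau \sim (2\beta)^3$ into your own normalized identity $3\alpha + 4\tau - 6\overline{\beta_v^2}$ (with $\alpha=\beta(1-\beta)$) yields $-3\beta^2 + 32\beta^3 + o(1)$, which is \emph{negative} for small $\beta$, not $\tfrac32\beta(1-2\beta)^2$. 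The actual extremizer for the relaxed problem (which the paper reaches by swaps) is a \emph{three}-level degree sequence with only $\beta n$, not $2\beta n$, vertices of degree $n/2$: those $\beta n$ vertices are joined to the first $n/2$ vertices, so $(1/2-\beta)n$ intermediate vertices have degree $\beta n$ and the rest have degree $0$.

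Finally, your plan makes the problem harder than necessary. You propose to also invoke pair-minimality ($d(u)+d(v)\le n-2$ for $uv\notin E$) and to lower-bound the triangle count $\tau$. The paper deliberately \emph{drops} the $+4t$ term and uses \emph{only} the singleton constraint $d(v)\le n/2$, and still obtains exactly the claimed bound: the extremizer of the relaxed problem happens to hit $\tfrac34(1-\sqrt{1-4\alpha})(1-4\alpha)$. Pair-minimality and triangles would be needed to push \emph{beyond} this bound towards the conjectured $\tfrac32\alpha(1-\sigma)$, as the remarks at the end of Section~\ref{sec:cofilling-2} explain, but they are not needed for the theorem as stated.
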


Fig.~\ref{f:f2bounds} shows a plot of this lower bound.

\labfigw{f2bounds}{12cm}{The  lower and upper bounds on $\varphi_2(\alpha)$:
the straight line is the basic bound,
the top one the upper bound (Proposition~\ref{p:ubbb}),
and the bottom (most curved) is the
lower bound (Theorem~\ref{thm:cofilling-2}).}

\begin{theorem}
\label{t:3ub}
For $d=3$ and $\alpha$ sufficiently small,
$$
\varphi_3(\alpha)\ge \frac 43\alpha- O(\alpha^2)
$$
(with a constant that could be made explicit).
\end{theorem}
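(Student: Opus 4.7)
The plan is to extend the link-based proof of Theorem~\ref{thm:cofilling-2} by combining two inductive link reductions. Fix minimal $E\subseteq\binom{V}{3}$ with $\|E\|=\alpha$, write $d_g := |\{w: g\cup\{w\}\in E\}|$ for the degree of a pair $g$, so that minimality is equivalent to $d_g\le(n-2)/2$ for every $g$. For each vertex $v$ the vertex-link $E_v := \{\{u,w\}:\{u,v,w\}\in E\}\subseteq\binom{V\setminus\{v\}}{2}$ satisfies $\deg_{E_v}(u) = d_{\{u,v\}} \le (n-2)/2$, the degree condition needed for Theorem~\ref{thm:cofilling-2}; and for each pair $g$ the pair-link $E_g := \{w : g\cup\{w\}\in E\}$ is a subset of $d_g$ vertices whose coboundary in $V\setminus g$ is an edge cut of exact size $d_g(n-2-d_g)$.

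For $4$-sets, let $k(f) = |\{t\in\binom{f}{3}: t\in E\}|$ and $N_k = |\{f\in\binom{V}{4}: k(f)=k\}|$, so that $|\delta E| = N_1+N_3$ and $\sum_k kN_k = (n-3)|E|\sim 4\alpha\binom{n}{4}$. A short case analysis on $k(f)$ yields the two key identities
\[
\sum_{v\in V}|\delta E_v| = 3N_1+2N_2+N_3+4N_4 = 2(N_1-N_3) + 4\alpha\binom{n}{4},
\]
\[
\sum_{g\in\binom{V}{2}}|\delta E_g| = 3N_1+4N_2+3N_3.
\]
The pair-link estimate $d_g(n-2-d_g) \ge (n-2)d_g/2$ (from minimality) summed gives $\sum_g|\delta E_g|\ge 3(n-2)|E|/2\sim 6\alpha\binom{n}{4}$, i.e., $N_1+N_3 \ge 2\alpha\binom{n}{4} - \tfrac{4}{3}N_2$. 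Applying Theorem~\ref{thm:cofilling-2} to each $E_v$ gives $|\delta E_v|\ge\bigl(\tfrac{3}{2}\|E_v\| - O(\|E_v\|^2)\bigr)\binom{n-1}{3}$; summing, using $\sum_v\|E_v\| = n\alpha$, and invoking the first identity yields (modulo error) $N_1-N_3 \ge \alpha\binom{n}{4} - O(\alpha^2\binom{n}{4})$. A suitable convex combination of these two inequalities, together with the identity $N_2+3N_3+6N_4 = \sum_g\binom{d_g}{2}$ that ties $N_2$ to $N_3$ and $N_4$, should eliminate $N_2$ and deliver the target $N_1+N_3 \ge \tfrac{4}{3}\alpha\binom{n}{4} - O(\alpha^2\binom{n}{4})$.

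The main obstacle is controlling the error $O(\sum_v\|E_v\|^2)$ in the vertex-link step, which naively (from $\|E_v\|\le 1/2$) gives $O(n\alpha)$ instead of the required $O(n\alpha^2)$. The sharp extremal example is the bipartite cone $E=\{\{v_0,a,b\}: a\in A,\,b\in B\}$ with $|A|=|B|=n/3$, yielding $\alpha = 2/(3n)$, $|\delta E| \sim \tfrac{4}{3}\alpha\binom{n}{4}$, $N_1 = N_2 \sim \tfrac{4}{3}\alpha\binom{n}{4}$, and $N_3 = N_4 = 0$. Its apex vertex $v_0$ has $\|E_{v_0}\| = 2/9$---not small---and the Taylor bound $\varphi_2(2/9) = 1/18$ is much weaker than the actual $\|\delta E_{v_0}\| = 2/9$, so a blind sum over vertices is lossy. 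Overcoming this requires invoking the full form of $\varphi_2$ rather than its Taylor expansion, combined with a careful split of vertices into ``light'' (with $\|E_v\| < \eps$) and ``heavy''; the few heavy vertices can be absorbed by the pair-link bound, which is advantageous precisely when some $d_g$'s are large. Executing this combination cleanly is the main technical hurdle.
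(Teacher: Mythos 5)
Your plan correctly sets up and manipulates identities, but it has two genuine gaps, one of which you identify and concede and one that you seem not to have noticed; together they prevent the sketch from being a proof, and the route taken is also different in key places from the paper's.

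First, the concrete obstruction you do not name: even with all error terms set to zero, the system of constraints you propose does not imply $N_1+N_3\ge\tfrac43\alpha\binom{n}{4}$. Normalize by $\binom{n}{4}$. You have the counting identity $\tilde N_1+2\tilde N_2+3\tilde N_3+4\tilde N_4=4\alpha$, the pair-link bound $\tilde N_1+\tfrac43\tilde N_2+\tilde N_3\ge 2\alpha$, and the vertex-link bound $\tilde N_1-\tilde N_3\ge\alpha$. Setting $\tilde N_3=\tilde N_4=0$, the first identity gives $\tilde N_2=2\alpha-\tfrac12\tilde N_1$; substituting into the pair-link bound yields the vacuous $\tfrac13\tilde N_1\ge-\tfrac23\alpha$, so all that remains is $\tilde N_1\ge\alpha$. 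The minimum of $\tilde N_1+\tilde N_3$ over the LP you write down is exactly $\alpha$, not $\tfrac43\alpha$. The culprit is the step $d_g(n-2-d_g)\ge(n-2)d_g/2$: for small $\alpha$ almost all $d_g$'s are much smaller than $(n-2)/2$, so this throws away nearly half of $\sum_g|\delta E_g|$. What one actually needs to close the argument this way is an honest upper bound $\sum_g\binom{d_g}{2}\le\bigl(\tfrac43\alpha+O(\alpha^2)\bigr)\binom{n}{4}$, which your ingredients do not deliver and which is precisely the hard part. (Check: in the $3$-partite extremal example $\sum_g\binom{d_g}{2}/\binom{n}{4}\approx\tfrac43\alpha$, so this bound, if true, is tight and gives the theorem directly via $\tilde N_1+\tilde N_3\ge 4\alpha-2\tilde D+4\tilde N_3+8\tilde N_4$.)

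Second, the gap you concede but do not fill. In the extremal example a fraction $\Theta(\alpha)$ of vertices have links of constant density $\approx 2/9$, so $\sum_v\|\lk(v,E)\|^2=\Theta(n\alpha)$, which makes the error in your vertex-link step $\Theta(\alpha)$ rather than $O(\alpha^2)$. Worse, applying Theorem~\ref{thm:cofilling-2} (whose lower bound degenerates near $\alpha=1/4$ and equals only $\approx 1/18$ at $2/9$) to those heavy links is strictly weaker than the basic bound $\varphi_2(\beta)\ge\beta$ there. You gesture at a light/heavy split ``absorbed by the pair-link bound,'' but absorbing it is exactly where the work lies, and the pair-link inequality as you have stated it is, per the first point, too lossy to do so.

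The paper's proof (via Observation~\ref{o:minlink}, Lemma~\ref{l:highdeg}, and Lemma~\ref{l:low3}) does carry out the two-regime split, but with different tools in each regime: for the $r=\beta n=C\alpha n$ highest-degree vertices it applies only the \emph{basic} bound $\varphi_2(\beta)\ge\beta$ to their links (which is sharper than Theorem~\ref{thm:cofilling-2} in that range) and tracks overcounting explicitly; for the remaining low-degree part it runs a one-term inclusion–exclusion and a second-moment estimate on pair degrees, classifying pairs as heavy or light relative to a threshold $\tau=\sigma^{1/3}$. Your framework with the $N_k$'s is a sensible reformulation, but to salvage it you would need the second-moment bound on pair degrees in each regime, not the crude inequality $d_g(n-2-d_g)\ge(n-2)d_g/2$, and you would need to replace Theorem~\ref{thm:cofilling-2} with the basic bound on heavy links.
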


These theorems will be proved in Sections~\ref{sec:cofilling-2} and \ref{sec:cofilling-ge2}, respectively. 

They do not improve on Gromov's lower bounds
for $c_3$, for example, since they do not beat the basic bound
for the values of $\alpha$ needed in 
Proposition~\ref{prop:GromovFillingBaranyConstant} for $d$ small.
However, they do apply if we take $d$ sufficiently large
in Proposition~\ref{prop:GromovFillingBaranyConstant},
and so they at least show that Gromov's lower bound
on $\ctop_d$ is not tight for large~$d$.

After the research reported in this paper was completed, Kr\'a{l'} et
  al.~\cite{KMS} proved the lower bound $\varphi_2(\alpha)\ge
  \frac97\alpha(1-\alpha)$. This is better than the bound of
  Theorem 5 for $\alpha$ larger than approximately $0.0626$,
  and it does improve on Gromov's lower bound for~$c_3$.

The bounds in Theorems~\ref{thm:cofilling-2}
and~\ref{t:3ub} may look like only minor improvements over the 
basic bound, but it turns out that they have the right order
of magnitude for $\alpha$ tending to~$0$. Indeed, 
we have the following upper bound on the $\varphi_d$'s.

\begin{prop}\label{p:ubbb}
For all $d\geq 1$ and $\alpha \leq \frac{1}{d+1}$, 
let $\sigma\in [0,1)$ be the smallest positive number with 
$\alpha=d!\sigma(\frac{1-\sigma}d)^{d-1}$. Then
$$\varphi_d(\alpha) \leq \tfrac{d+1}d\alpha(1-\sigma),$$
and consequently,   $\varphi_d(\alpha)\le \frac{d+1}d\alpha$.
\end{prop}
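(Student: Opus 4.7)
The plan is to exhibit an explicit $E \subseteq \binom{V}{d}$ realizing the claimed density and coboundary size. Partition $V$ into $d+1$ blocks $V_0, V_1, \ldots, V_d$ with $|V_0| \approx \sigma n$ and $|V_1| = \cdots = |V_d| \approx (1-\sigma)n/d$, and let $E$ be the set of all $d$-subsets that contain exactly one vertex from each of $V_0, V_1, \ldots, V_{d-1}$ and no vertex from $V_d$. A direct product count gives $\|E\| \to d!\sigma\bigl(\tfrac{1-\sigma}{d}\bigr)^{d-1} = \alpha$ as $n \to \infty$.

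To compute $\delta E$, I classify each $(d+1)$-set $f$ by its block profile $(k_0, \ldots, k_d) := (|f \cap V_0|, \ldots, |f \cap V_d|)$ and count the parity of $|E \cap \binom{f}{d}|$. The $d$-subface obtained by removing $v \in V_j$ lies in $E$ iff the resulting profile is $(1,\ldots,1,0)$; this forces either $k_i = 1$ for all $i$ (contributing one valid $v$, taken from $V_d$), or $k_d = 0$ with some $k_j = 2$ for $j < d$ and all remaining $k_i = 1$ (contributing $k_j = 2$ valid choices). Modulo $2$ the only surviving case is the full transversal, so $\delta E$ is exactly the set of full-transversal $(d+1)$-subsets, of size $\prod_{i=0}^{d}|V_i|$, hence $\|\delta E\| \to (d+1)!\sigma\bigl(\tfrac{1-\sigma}{d}\bigr)^d = \tfrac{d+1}{d}\alpha(1-\sigma)$. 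The ``consequently'' bound $\varphi_d(\alpha) \leq \tfrac{d+1}{d}\alpha$ follows from $1-\sigma \leq 1$.

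The main obstacle is verifying that $E$ is minimal, which is needed to conclude $\varphi_d(\alpha) \leq \|\delta E\|$. By acyclicity of $\Delta^{n-1}$, every $E'$ with $\delta E' = \delta E$ has the form $E \triangle \delta T$ for some $T \in \binom{V}{d-1}$, so minimality amounts to $|E \triangle \delta T| \geq |E|$ for all such $T$. A first sanity check compares $E$ with the ``rotated'' variants $E^{(i)}$ obtained by excluding $V_i$ instead of $V_d$ (all lying in the coset of $E$): a direct comparison of products shows $|E^{(i)}| \geq |E|$ for every $i$ precisely when $|V_0| \leq |V_1|$, i.e., $\sigma \leq 1/(d+1)$, consistent with the range of interest. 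The technical crux is extending this check to arbitrary $T \in \binom{V}{d-1}$; I would do so by expressing $|E \triangle \delta T|$ in terms of the block-wise structure of $T$ (using the action of the block-preserving permutation group on $V$ to reduce to $T$'s of a symmetric form) and then bounding the resulting combinatorial expression---the $d = 1$ case reduces to the trivial edge-cut inequality $|S| \leq n/2$, and $d = 2$ admits a short verification in the three parameters $|T \cap V_i|$.
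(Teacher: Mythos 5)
Your construction is the same as the paper's (just with shifted indices: your $V_0,\ldots,V_d$ are its $V_1,\ldots,V_{d+1}$), and your computations of $\|E\|$ and $\|\delta E\|$ via block profiles are correct. The gap is minimality: you explicitly acknowledge this is the "technical crux," and what you offer---direct case analysis of $|E\mathbin\triangle\delta T|$ via the block profile of $T$, reduction by the block symmetries, and explicit verification only for $d=1,2$---is an incomplete sketch of a blunt-force approach, not a proof.

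The paper dispatches minimality with a short double-counting argument that avoids any case analysis of $T$. Note that $F:=\delta E$ is the complete $(d+1)$-partite family on $V_0,\ldots,V_d$. Take any $E'$ with $\delta E'=F$. Every $f\in F$ contains an odd, hence nonzero, number of elements of $E'$, so each $f\in F$ contains at least one $e\in E'$. On the other hand, a $d$-tuple $e$ can be extended to a member of $F$ only if $e$ itself is a transversal of $d$ of the $d+1$ blocks, and then the extending vertex must come from the unique missed block; so each $e\in E'$ is contained in at most $M:=\max_i|V_i|$ members of $F$. Hence $|E'|\ge |F|/M$. Since $\sigma\le\frac1{d+1}$ makes $M=\frac{1-\sigma}{d}n$ equal to $|V_d|$, the block $E$ misses, one has
$$\frac{|F|}{M}=\frac{\prod_{i=0}^{d}|V_i|}{|V_d|}=\prod_{i=0}^{d-1}|V_i|=|E|,$$
giving $|E'|\ge|E|$ directly. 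This is where you need the hypothesis $\sigma\le\frac1{d+1}$ (your own "sanity check" with the rotated variants $E^{(i)}$ surfaced the same constraint). I recommend replacing your sketched approach with this counting argument; the claimed reduction to arbitrary $T$ by symmetry is neither carried out nor obviously routine for general $d$.
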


These bounds are plotted in Fig.~\ref{f:upperboundplots}.

\labfigw{upperboundplots}{10cm}{
Plots of the upper bounds from Proposition~\ref{p:ubbb}
for $d=2,3,4$, together with the basic bound.}

Proposition~\ref{p:ubbb} follows from a simple example, whose special
case with $\alpha=\frac1{d+1}$ has already been noted in 
\cite{Gromov:SingularitiesExpandersTopologyOfMaps2-2010}
and in \cite{LinialMeshulam:HomologicalConnectivityRandom2Complexes-2006,MeshulamWallach:HomologicalConnectivityRandomComplexes-2009}.
We present the example and its analysis in Section~\ref{s:multipart}.

We conjecture that the bound in Proposition~\ref{p:ubbb}
is the truth, and moreover, that the example mentioned above
is essentially the only possible extremal example.
However, a proof may be challenging even for the $d=2$ case.
On the other hand, we believe that a suitable extension of 
the proof of Theorem~\ref{t:3ub} may provide a bound of the form
$\varphi_d(\alpha)\ge \frac{d+1}d\alpha-o(\alpha)$ as $\alpha\to 0$.
At present it seems that such an extension would
 be highly technical and complicated.

In view of the upper bound $\varphi_d(\alpha)\le
\frac{d+1}d\alpha$, Gromov's lower bound on $c_d$
cannot be improved by more than a factor of roughly $d$
using Proposition~\ref{prop:GromovFillingBaranyConstant}
alone. 

In Section~\ref{sec:c3},
we introduce a somewhat different approach, which goes beyond 
Proposition~\ref{prop:GromovFillingBaranyConstant} and uses
additional combinatorial structure, and we show that
it can provide a slightly better lower bound for $c_3$.
We formulate a combinatorial extremal problem whose solution might perhaps lead to a tight lower bound for the~$\ctop_d$.

\section{Basics}
\label{sec:basics}
\label{s:multipart}
\label{s:seidel}

\heading{Linearity of the coboundary.}
For systems $E,E' \in\binom{V}{d}$ of $d$-tuples, $E+E'$ means the symmetric difference.

The coboundary operator is (well known and easily checked to be)
\emph{linear} with respect to this operation, i.e., 
$$\delta(E+E')=\delta E+\delta E',$$ 
and we have
$$\delta\delta E=0,$$ 
where $0$ means the empty system of $(d+2)$-tuples.

\heading{Cochains, coboundaries, and cocycles.}
A system $E\in \binom{V}{d}$ of $d$-tuples is also sometimes called a $(d-1)$-dimensional \emph{cochain}\footnote{Strictly speaking, a cochain with $\Z_2$-coefficients in the simplex $\Delta^{n-1}$.}, or simply $(d-1)$-cochain. Later on, when working with systems of tuples of various arities, it will be convenient to use this terminology. A cochain $E$ is called a \emph{cocycle} if $\delta E=0$, and it is a \emph{coboundary} if it can be written as $E=\delta D$ for some $(d-2)$-chain $D$.

In algebraic terms, a $(d-1)$-cochain $E\subseteq \binom{V}{d}$ can be identified with a $0/1$-vector indexed by the elements of $\binom{V}{d}$, which we can interpret as an element of the vector space $\Z_2^{\binom{V}{d}}$ over the $2$-element field $\Z_2$, and the symmetric difference $+$ corresponds to the usual addition is this vector space. The coboundary operators are linear maps between these spaces (mapping $d-1$-cochains to $d$-cochains). The $(d-1)$-cocycles are precisely the elements of the kernel of this linear map, and the $d$-coboundaries are the elements of the image. The property $\delta\delta =0$ is usually called the \emph{chain complex property}.

\paragraph{Minimality.} As we have seen, every coboundary is also a cocycle, i.e., if $F=\delta E \in \binom{V}{d+1}$, then $\delta F=0$. 

In our setting, this is a complete characterization, i.e., $F \in \binom{V}{d+1}$ is a coboundary if and only if $\delta F=0$. Topologically speaking, this is because the $(n-1)$-simplex has zero cohomology. 

We stress that there is one exceptional case, namely,
$d=0$. There are two $0$-cocycles $F\in \binom{V}{1}$, namely, $F=V$, 
the set of all vertices, and $F=0$, the empty set of vertices. 
However, \emph{by definition}, $0$ is considered to be a coboundary, 
but $V$ is not. In topological terms, this is because we are working 
with ordinary, \emph{non-reduced} cohomology.\footnote{We remark that from a combinatorial point of view, it may be more natural also to consider the subsets of $\binom{V}{0}$. There are two such subsets, namely $\{\varepsilon \}$ (the singleton set containing the unique $0$-tuple $\varepsilon$ of vertices of $V$) and $0$ (the empty set of $0$-tuples). For reduced cohomology, one defines $\delta \{\varepsilon \} = V$, so that the exceptional case in the characterization of cocycles disappears, but for the topological theorem that Gromov applies, it is important to work with unreduced cohomology.}  
This will be formally important in Section~\ref{subsec:simplicialsets}.

Because of the equivalence of cocycles and coboundaries, for $d-1 \geq 1$, 
minimality of $E$ can be equivalently characterized 
as follows: $E$ is minimal if $\|E\|\le \|E+\delta D\|$ for
every $D\subseteq {V\choose d-1}$. 
Thus, minimality means that $E$ contains at most half
of the $d$-tuples from each system of the form $\delta D$.
Consequently, if $E$ is minimal and $E'\subseteq E$,
then $E'$ is minimal as well. 

It also follows from the alternative characterization of minimality that every minimal $E$ satisfies $\|E\| \leq 1/2$. 
To see this, let $D=\{x\}$ be a singleton set consisting of a single $(d-1)$-tuple $x$. Then $|E \cap \delta D| \leq 1/2 | \delta D|$ means that $x$ is incident to at most $|\delta D|/2=\frac{n-d+1}{2}$ many $d$-tuples $e \in E$. Summing up over all $x \in \binom{V}{d-1}$, we get $d |E| \leq \binom{n}{d-1} (n-d+1)/2$, and hence $|E| \leq \binom{n}{d}/2$. 

As was remarked above, minimality refers to a fixed vertex set $V$. 
If a minimal $E \subseteq \binom{V}{d}$ happens to be contained
in $\binom{W}{d}$ then, considered as a set of subsets of $W$, it need
no longer be minimal. For example, suppose we partition $V$ into three vertex sets $V_1, V_2, V_3$ of size $n/3$ each and let $E=\{ \{v_1,v_2\}: v_1\in V_1, v_2 \in V_2\}$, the (edge set of the) complete bipartite graph between $V_1$ and $V_2$. Then it is not hard to check that $E$ is minimal as a subset of $\binom{V}{2}$, but not as a subset of $\binom{V_1 \cup V_2}{2}$.

\paragraph{Links of vertices.}
For a vertex $v\in V$, let us write $E_v:=\{e\in E: v\in e\}$.
The \emph{link} of $v$ in $E$ is the system of $(d-1)$-tuples
$\lk(v,E):=\{e\setminus \{v\}: e\in E_v\}$.

It is easy to check the following formula for the coboundary of
the link:
\begin{equation}\label{e:link-cobo}
\delta\lk(v,E) = E_v + \lk(v,\delta E_v)
\end{equation}
(the $+$ on the right-hand side is actually a disjoint union in this case).

\paragraph{The basic bound for the filling profile.} 
We now recall a combinatorial proof of the basic lower bound for $\varphi_d$.

\begin{proof}[Proof of Lemma~\ref{lem:basic}]
 Let $F\subseteq{V\choose d+1}$  be a coboundary, and let $\beta:=\|F\|$.
We define the \emph{normalized degree} of $v$ as 
$\|\lk(v,F)\|=|\lk(v,F)|/{n\choose d}$. 

A simple counting
shows that the average normalized degree of a vertex equals $\beta=\|F\|$.
In particular, there exists a vertex $v$ of normalized degree at most $\beta$;
so we fix such a $v$ and 
we set $E:=\lk(v,F)$. We will check that $\delta E=F$.

Let $F_{\setminus v}:=F\setminus F_v$. Since $F$ is a coboundary,
we have $\delta F=0$, and thus $\delta F_v=\delta F_{\setminus v}$.

Let us consider an arbitrary $(d+1)$-tuple $f$ and distinguish two cases.
If $v\in F$, then it is easily seen that $f\in \delta E$
is equivalent to $f\in F$. 

Next, let $v\not\in f$. Assuming $f\in\delta E$, we have
$f^+:=f\cup\{v\}\in \delta F_v$. Using $\delta F_v=F_{\setminus v}$,
we get $f^+\in \delta F_{\setminus v}$. Since the sets of $F_{\setminus v}$
avoid $v$, there is only one set of $F'$ that may be contained
in $f^+$, namely~$f$. So $f\in F$. This argument can be reversed,
showing that $f\in F$ implies $f\in\delta E$.
\end{proof}

%According to (\ref{e:link-cobo}) we have
%$\delta E = F_v + \lk(v,\delta F_v)=F_v + \lk(v,\delta F')$.
%Since the $(d+1)$-tuples of $F'$ avoid $v$, it is easy to see that
%$\lk(v,\delta F')=F'$, and we are done.

\paragraph{The upper bound example. }
Here we prove Proposition~\ref{p:ubbb}, the upper bound
on the {\cofilling} profile $\varphi_d$.

Let $\sigma \in [0,1]$ be a parameter. We partition the vertex set $V$ 
into $V_1,\ldots,V_{d+1}$,
where $|V_1|=\sigma n$ and the remaining vertices are divided evenly, i.e.,
$|V_i|= \frac{1-\sigma}d n$, $i>1$ (we ignore divisibility issues).
Let $E$ consist of all $d$-tuples that use exactly
one point from each of $V_1,\ldots,V_d$; see Fig.~\ref{fig:UB-cofilling}.
We have $\|E\|=:\alpha=d!\sigma(\frac{1-\sigma}d)^{d-1}$.

\begin{figure}[tb]
\begin{center}
\includegraphics[scale=1]{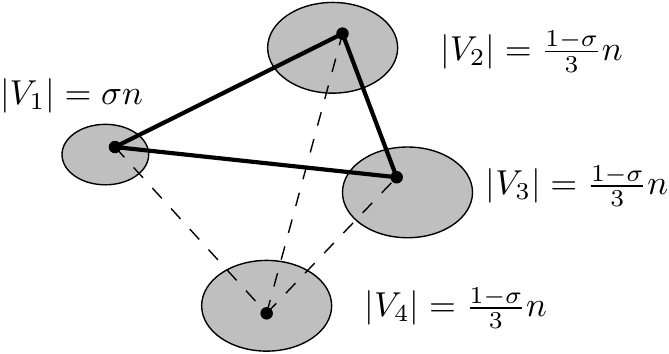}
\caption{The upper bound example in the case $d=3$. The solid triangle depicts a typical element of $E$, and the dashed segments complete it to a tetrahedron that is a typical element of $\delta E$.\label{fig:UB-cofilling}}
\end{center}
\end{figure}

Then $F:=\delta E$ is the complete $(d+1)$-partite system on
$V_1,\ldots,V_{d+1}$, and $\|\delta E\|=\frac{d+1}d\alpha(1-\sigma)$.
This matches the quantitative bounds in Proposition~\ref{p:ubbb},
and it remains to check the minimality of $E$,
which is easy  (and stated by Gromov and by Meshulam et al.\ without
proof): Let us consider some $E'$ with $F=\delta E'$. 
Every $f\in F$ contains at least one
$e\in E'$, while every $e\in E'$ is contained in at most
$M:=\max_i{|V_i|}$ many $f\in F$. So $|E'|\ge |F|/M=|E|$.
\proofend

%The special case $\sigma=\frac{1}{d+1}$ of this example is noted in \cite{Gromov:SingularitiesExpandersTopologyOfMaps2-2010}
%and in \cite{LinialMeshulam:HomologicalConnectivityRandom2Complexes-2006,MeshulamWallach:HomologicalConnectivityRandomComplexes-2009}, i.e., $|V_1|=\cdots=|V_{d+1}|=\frac{n}{d+1}$. This special case shows that the basic bound is tight for
%$\alpha=d!/(d+1)^d\approx e^{-d}$.

\paragraph{On Seidel's switching. } In combinatorics,
a \emph{two-graph} is a set $F\subseteq {V\choose 3}$ of triples
with $\delta F=0$,i.e., a cocycle in our terminology).
As we have mentioned, this is equivalent to $F$ being a coboundary,
i.e., to the existence of some $E\subseteq
{V\choose 2}$ with $F=\delta E$. The system of all possible
$E'$ with $\delta E'=F$ is called the \emph{Seidel switching class}
of~$E$.

Two-graphs and Seidel's switching were introduced by Van Lint and Seidel
\cite{vL-S} and further studied by many authors,
because of their connections with equiangular lines,
 strongly regular graphs, and interesting finite groups, for example
(for surveys see, e.g., Seidel and Taylor
\cite{SeidelTaylor} or Hage \cite{Hage-thesis}).  

Numerous authors investigated the computational complexity
of various problems related to Seidel's switching
(we refer to \cite{Jeli-al} for citations). 
For us, the following result is of particular interest:
It is NP-complete to decide if a given $E\subseteq {V\choose 2}$
is minimal (in its Seidel switching class), as was proved
by Jel\'{\i}nkov\'a et al.~\cite{Jeli-al}. Their reduction
produces only $E$'s with $\|E\|\ge\frac 12$ and
$\|E'\|\ge\frac14$ for all $E'$ in the same switching class;
however, recently Jel\'{\i}nek (private communication, September 2010)
was able to modify the reduction, showing that
the problem remains NP-complete even if we restrict to
only to $E$'s with $\|E\|\le c$, for every fixed $c>0$.
This shows that minimal sets have a complicated structure, and
one cannot expect to find a reasonable characterization.

\section{An Outline of Gromov's Topological Approach}
\label{sec:topological-outline}
In this section, we give a rather informal and elementary outline of the topological part of Gromov's approach
(Sections~2.2, 2.4, 2.5, and 2.6 of \cite{Gromov:SingularitiesExpandersTopologyOfMaps2-2010}). This outline
is not directly related to the new (combinatorial) results of the present paper.

We strive to keep the discussion as elementary as possible for as long as possible. For this reason, we restrict ourselves to the most basic (affine) setting of finite set $P$ of $n$ points in general position in $\R^d$, which allows us to describe most steps in the argument in simple geometric terms.

% this is the content of Section~\ref{subsec:affine}.

As remarked above, Gromov's method applies in much more general situations. In Section~\ref{subsec:abstract}, we briefly discuss the more general setting and also include some remarks as to how our elementary discussion %in Section~\ref{subsec:affine} 
would be formulated in more standard topological terms.

%\subsection{Gromov's Method in the Affine Setting}
%\label{subsec:affine}

We begin with an outline of our outline, by means of an example.
\begin{example}
Let $P=\{p_1,\ldots, p_5\}$ be the set of five points in $\R^2$ depicted by bold dots in 
Figure~\ref{fig:PlanarAffineExample}, and let $V=[5]:=\{1,2,3,4,5\}$.
\begin{figure}[tb]
\centerline{\includegraphics[scale=0.7]{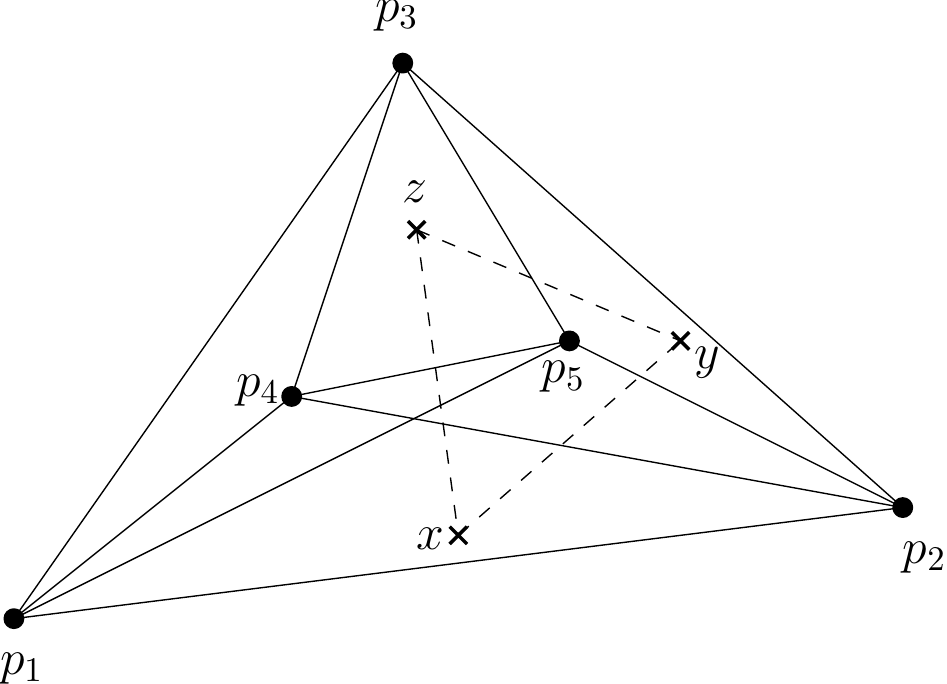}}
\caption{A set of five labeled points in general position in $\R^2$ (the image of $\Delta^4$ under an affine map).\label{fig:PlanarAffineExample}}
\end{figure}

Consider the three points $x,y,z$ marked by crosses in the picture. These three points are in general position w.r.t. $P$, in the sense that they do not lie on any line segment spanned by $P$, and no point of $P$ lies on any of the line segments spanned by $x,y,z$.

Let $F_x=\{ \{1,2,3\}, \{1,2,4\}, \{1,2,5\}\}$ be the set of all triples $\{i,j,k\} \in \binom{V}{3}$ such that $x$ lies in the triangle $p_i p_j p_k$, and let $F_y=\{\{1,2,3\},\{2,3,4\},\{2,3,5\}\}$ and $F_z=\{\{1,2,3\}, \{1,3,5\},$ $\{2,3,4\},\{3,4,5\}\}$ be defined analogously as the (index sets of) triangles containing $y$ and $z$, respectively.

Let $F_{xy}=\{ \{2,4\}, \{2,5\}\}$ be the set of pairs $\{i,j\} \in \binom{V}{2}$ such that the line segment $p_i p_j$ intersects the line segment $xy$, and let $F_{yz}=\{\{3,5\}\}$ and $F_{xz}=\{ \{1,5\}, \{2,4\}, \{4,5\}\}$ be defined analogously.

Finally, let $F_{xyz}= \{5\}$ be the set of indices $i \in V$ such that $p_i$ lies in the triangle $xyz$ (here, we identify elements $i \in V$ with singleton sets $\{i\} \subseteq V$ to simplify notation).

The basic observation is that these sets satisfy the relations
$$
\delta F_x = \delta F_y = \delta F_z =0,
$$
$$
F_x + F_y = \delta F_{xy},\quad F_y + F_z =\delta F_{yz}, \quad F_x+ F_z =\delta F_{xz},
$$
and
$$
F_{xy} + F_{yz} + F_{xz} = \delta F_{xyz}.
$$ 

It is straightforward to verify this in the specific example at hand, but it may in fact be easier---and an instructive 
exercise---to check that these relations are not a coincidence, but hold in general for any finite set $P \subseteq \R^2$ and any triple of points $x,y,z$, 
assuming only general position. Moreover,
similar facts hold in $\R^d$.
We discuss a proof of the general case below.
\end{example}

Informally speaking and in very general terms, the structure of Gromov's topological approach can be summarized as follows. Each step of the argument will be discussed in more detail in a separate subsection below. We fix $d\geq 1$ (the target dimension) and $V=[n]$ (the vertex set of the $(n-1)$-simplex $\Delta^{n-1}$).
\begin{enumerate}
\item We define a topological space $\cocyc{d}=\cocyc{d}(\Delta^{n-1})$, the space of $d$-dimensional cocycles (of the $(n-1)$-simplex).

\indent {\small This space is a \emph{simplicial set}, i.e., a space built of vertices, edges, triangles, and higher-dimensional simplices like a simplicial complex, but simplices are allowed to be glued to each other and to themselves in more general ways (in a first approximation, simplicial sets can be thought of as higher-dimensional analogues of multigraphs with loops, while simplicial complexes are higher-dimensional analogues of simple graphs).

The vertices of $\cocyc{d}$ are the $d$-dimensional cocycles $F\subseteq \binom{V}{d+1}$. The edges of $\cocyc{d}$ correspond to relations of the form $F_1 + F_2 = \delta F_{12}$, where $F_{12}\subseteq \binom{V}{d}$. The triangles of $\cocyc{d}$ correspond to a triple of $d$-dimensional cocycles $F_1$, $F_2$, $F_3$, edges between them, and a relation of the form $F_{12} + F_{23} + F_{31} = \delta F_{123}$, where $F_{123} \subseteq \binom{V}{d-1}$, etc. }

We stress that $\cocyc{d}$ depends only on $n$ and $d$ and is defined purely combinatorially. Moreover, as a combinatorial object, it is huge. For instance, the number of vertices of $\cocyc{d}$ ($d$-dimensional cocycles of the simplex) is $2^{\binom{n-d}{d}}$.

\item With every labeled $n$-point set $P\subseteq \R^d$, we associate a particular subspace\footnote{Formally, it would be more precise to regard $\mathcal{W}$ as a $d$-dimensional $\Z_2$-homology cycle, i.e., as a formal $\Z_2$-linear combination of $d$-simplices in $\cocyc{d}$ such that $\partial\mathcal{W}=0$. However, since we are working with $\Z_2$-coefficients, we can simply think of $\mathcal{W}$ as a subspace, given as the union of those $d$-simplices that appear an odd number of times in the formal sum.} $\mathcal{W} =\mathcal{W}(P) \subseteq \cocyc{d}$.

{\small A concrete way of doing this is to choose a triangulation $\mathcal{T}$ of $\R^d$ that is in general position w.r.t. $P$ (i.e., no $k$-dimensional simplex of $\mathcal{T}$ intersects any $\ell$-dimensional simplex spanned by $P$ if $k+\ell< d$). With every vertex $x$ of $\mathcal{T}$ we associate the set 
$$F_x:=\{ f=\{i_0,i_1,\ldots ,i_d\} \in  \textstyle{\binom{V}{d+1}}: x \in p_{i_0} p_{i_1}\ldots p_{i_d}\}$$ 
of (indices of) $d$-simplices spanned by $P$ that contain $x$. As indicated by the example, each such $F_x$ is a cocycle, i.e., a vertex of $\cocyc{d}$ (but not all vertices of $\cocyc{d}$ may be of this special form). 

With every edge $xy$ of $\mathcal{T}$, we associate the set $F_{xy}$ of $d$-tuples from $V$ such that the corresponding points of $P$ span a $(d-1)$-simplex that intersects $xy$. As in the example, we get the relation $F_x + F_y =\delta F_{xy}$, and hence an edge of $\cocyc{d}$. 

Similarly, each $k$-dimensional simplex of the triangulation gives rise to a $k$-simplex of $\cocyc{d}$ (but not all $k$-simplices in $\cocyc{d}$ may be of this form). We define the subspace $\mathcal{W}$ to consist of all simplices of $\cocyc{d}$ that are obtained from an odd number of simplices of $\mathcal{T}$ using this construction. (In principle, different simplices of the triangulation $\mathcal{T}$ may tield the same $k$-simplex in $\cocyc{d}$.)}

\item It follows from a theorem in algebraic topology, the Almgren--Dold--Thom Theorem, that the subspace $\mathcal{W}$ is not contractible inside $\cocyc{d}$.

\item If we choose the triangulation $\mathcal{T}$ sufficiently finely,
 then for every point $a\in \R^d$, there is a vertex $x$ of $\mathcal{T}$ with $F_a = F_x$. Thus, if no point of $\R^d$ is covered by ``many'' $d$-simplices of $P$, then all sets $F_x$ are ``small.'' If this is so,
 then by purely combinatorial means, we can define a concrete way of contracting the subspace $\mathcal{W}$ to a single point inside $\cocyc{d}$; see Figure~\ref{fig:IllustrationContraction}. This is a contradiction. 
\begin{figure}
\begin{center}
\includegraphics{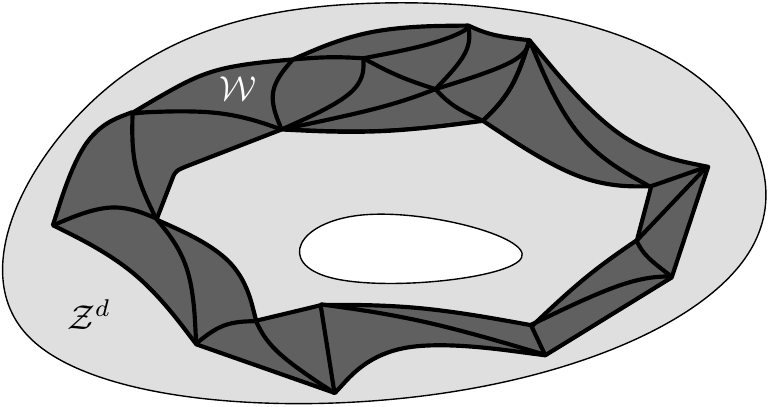}\quad \includegraphics{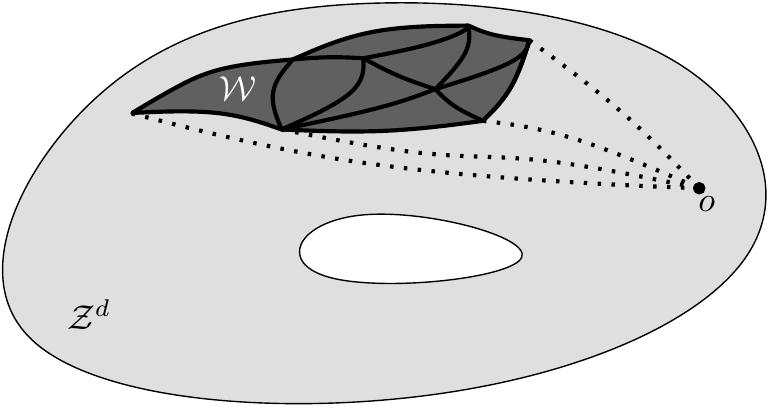}
\caption{A schematic illustration of the last two steps of the argument: $\mathcal{W}$ is not contractible inside $\cocyc{d}$, but if no point in $\R^d$ were covered by sufficiently many $d$-simplices of $P$ then we could contract $\mathcal{W}$ inside $\cocyc{d}$ to a single point.\label{fig:IllustrationContraction}}
\end{center}
\end{figure}
Thus, some point must be covered by many $d$-simplices.
\end{enumerate}

We now proceed to discuss the above steps in more detail.

\subsection{Simplicial Sets and the Space of Cocycles}
\label{subsec:simplicialsets}
\emph{Simplicial sets}\footnote{These objects also have many other names commonly found in the literature, including \emph{complete semisimplicial complexes} \cite{EilenbergZilber:SemisimplicialComplexes-1950}. Gromov uses the terminology \emph{semisimplicial spaces}.
}
are a generalization of simplicial complexes. As in the case of a simplicial complex, a simplicial set is built from $0$-simplices (vertices), $1$-simplices (edges), $2$-simplices (triangles), and higher-dimensional simplices.
One starts with the vertices, then glues each edge to one or two vertices by its endpoints, then one attaches triangles to vertices or edges along their boundaries, etc. In contrast to simplicial complexes, the attaching may involve various identifications. For instance, both endpoints of an edge may be attached to the same vertex, and two or more $i$-simplices in a simplicial set may have the same boundary. In this respect, simplicial sets can be thought of, in a first approximation, as higher-dimensional analogues of multigraphs with loops. On the other hand, in contrast to general cell complexes, there are restrictions as to what kind of attaching maps are allowed%
\footnote{Roughly speaking, one can think of each of the original simplices as having an ordered set of vertices. The attaching maps are linear maps induced by weakly monotone (order-preserving) maps between the vertex sets of the simplices.}, which makes simplicial sets more combinatorial than general cell complexes. We refer the reader to the article by Friedman \cite{Friedman:IntroductionSimplicialSets-2008} 
for a very clear and accessible introduction to simplicial sets and to the book by May \cite{May:SimplicialObjects-1992} for a detailed treatment (further references can be found in Friedman's article). 
%The original article by  Eilenberg and Zilber \cite{EilenbergZilber:SemisimplicialComplexes-1950} is also very readable.

The key object in Gromov's method is the \emph{space of $d$-dimensional cocycles}, which we denote\footnote{
For those readers who wish to read \cite{Gromov:SingularitiesExpandersTopologyOfMaps2-2010} in conjunction with the present one, we remark that Gromov uses the notation $\textrm{cl}^d$ or $\textrm{cl}^d_{\textup{sms}}$ for the space we denote by $\cocyc{d}$.} by $\cocyc{d}=\cocyc{d}(\Delta^{n-1})$ and which is a simplicial set defined as follows.

The vertices of $\cocyc{d}$ are the $d$-dimensional cocycles (of the simplex $\Delta^{n-1}$), i.e., subsets $F\subseteq \binom{V}{d+1}$ such that $\delta F=0$.

The edges of $\cocyc{d}$ are given by two (not necessarily distinct) $d$-cocycles $F_1$ and $F_2$ and a set $F_{12}\subseteq \binom{V}{d}$ such that $\delta F_{12}=F_1+F_2$. We stress that $F_{12}$ and at least one of the $F_i$ are necessary in order to uniquely define an edge in $\cocyc{d}$. If there is a $F_{12}'$ with $\delta F_{12}'=F_1+F_2$ then it defines a different edge  of $\cocyc{d}$ connecting the same pair of vertices. On the other hand, if $F_1'$ and $F_2'$ are another pair of $d$-cocycles with $\delta F_{12} =F_1'+F_2'$, then the same $F_{12}$ yields a different edge of $\cocyc{d}$ connecting a different pair of vertices.  

In the next step, a triangle of $\cocyc{d}$ is given by a triple of $d$-cocycles $F_1$, $F_2$, $F_3$, a triple of sets $F_{ij}\subseteq \binom{V}{d}$ and a set $F_{123} \subseteq \binom{V}{d-1}$ such that
\begin{enumerate}
\item[(i)] $\delta F_{ij}= F_i + F_j$, $1\leq i < j \leq 3$, and 
\item[(ii)] $\delta F_{123} = F_{12}+ F_{13} + F_{23}$. 
\end{enumerate}
The $F_i$ and the $F_{ij}$ define three (not necessarily distinct) vertices and three (not necessarily distinct) edges of $\cocyc{d}$ that form the boundary of a triangle, and together with this other data, $F_{123}$ defines a triangle glued in along that boundary. Again, there may be other $F_{123}'$ with the same coboundary, which define different triangles glued to the same boundary (a higher-dimensional analogue of a multiedge), and there may be a different set of $F_i'$ and/or $F_{ij}'$ which also satisfy conditions (i) and (ii); if so, they yield, together with $F_{123}$, a different triangle of $\cocyc{d}$, glued to a different boundary.

One can continue this definition inductively\footnote{In the beginning of \cite[Section 2.2]{Gromov:SingularitiesExpandersTopologyOfMaps2-2010} Gromov also gives an equivalent definition along the lines of the usual formal viewpoint of simplicial sets as functors from the category of finite totally ordered sets and monotone maps to the category of sets, as in \cite{May:SimplicialObjects-1992}.} for simplices of arbitrary dimension $r$. The case of $(d+1)$-simplices in $\cocyc{d}$, deserves special attention, however (due to the exceptional behavior of the coboundary operator in dimension zero, i.e., the fact that $V$ is not considered a coboundary, which was mentioned in Section~\ref{sec:basics}). A $(d+1)$-simplex in $\cocyc{d}$ is given by the following data (see Figure~\ref{fig:3SimplexZ2}): for each $i=0,1,\ldots,d$
 and each $A\in \binom{[d+2]}{i+1}$, there is 
a set $F_A \in \binom{V}{d+1-i}$ 
(i.e., a set of $(d-i)$-dimensional faces) such that 
$$\delta F_A =\sum_{B \in \partial A} F_B \qquad \textrm{ for }  0 \leq i \leq d,$$
and 
$$\sum_{A \in \binom{[d+2]}{d+1}} F_A=0.$$
\begin{figure}[tb]
\begin{center}
\includegraphics{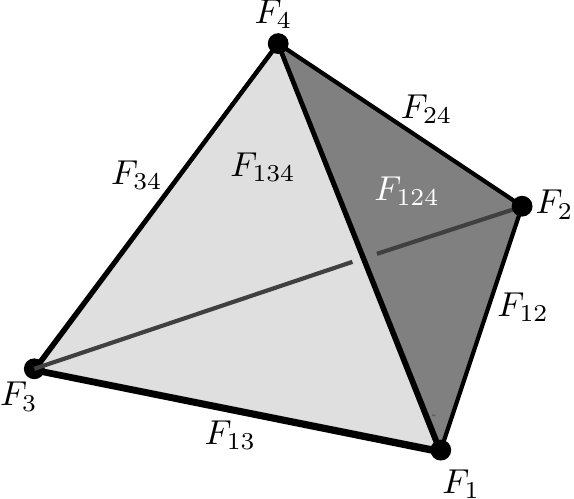}
\caption{An illustration of a $3$-dimensional simplex in $\cocyc{2}$. It is given by four $2$-dimensional cocycles $F_i \in \binom{V}{3}$, six sets $F_{ij} \in \binom{V}{2}$, and four sets$F_{ijk} \in \binom{V}{1}$ (with pairwise distinct indices $i,j,k$  running between $1$ and $4$) that satisfy the relations $\delta F_{ij}= F_i + F_j$,  $\delta F_{ijk}=F_{ij}+F_{jk}+F_{ik}$, and $\sum_{ijk} F_{ijk}=0$. 
\label{fig:3SimplexZ2}}
\end{center}
\end{figure}

\subsection{Intersections and Cocycles}
Let $P =\{p_1,p_2,\ldots, p_n\} \subseteq \R^d$ be a labeled set of $n$ points in general position. We think of $V=[n]$ as the set of ``labels'' of $P$. The goal of this section is to define the subspace $\mathcal{W}=\mathcal{W}(P) \subseteq \cocyc{d}$ for this set $P$.

Let $A=a_0a_1\ldots a_k$ be a $k$-dimensional simplex in $\R^d$ that is in general position w.r.t. $P$, i.e., no $i$-face of $A$ intersects any $(d-i-1)$-simplex spanned by $P$, $0\leq i \leq k$. We define
$$F_A:= \{\{i_0, i_{1}\ldots, i_{d-k} \} \in {\textstyle \binom{V}{d+1-k}}: A \cap p_{i_0}p_{i_1}\ldots p_{i_{d-k}} \neq \emptyset\}.
$$
That is, we consider the $(d-k)$-simplices spanned by $P$ that are intersected by $A$. Each such simplex is of the form $p_{i_0}p_{i_1}\ldots p_{i_{d-k}}$ for some $(d+1+k)$-tuple $\{i_0, i_1 \ldots, i_{d-k} \} \in {\textstyle \binom{V}{d+1-k}}$ of labels, and $F_A$ consists precisely of these tuples. For simplicity, we will also say that $F_A$ corresponds to the set of $(d-k)$-simplices of $P$ intersected by $A$.

Thus, for a point $x \in \R^d$ in general position w.r.t.\ $P$, $F_x =\{ \{i_0,i_1,\ldots, i_d\} \in \binom{V}{d+1}: x \in p_{i_0}p_{i_1}\ldots p_{i_d}\}$ corresponds to the set of $d$-simplices of $P$ that contain $x$. If $xy$ is a segment
in general position, then $F_{xy}$ corresponds to the set of $(d-1)$-simplices of $P$ that intersect $xy$, etc. 

As remarked above, the sets $F_x$ are always cocycles, i.e., $\delta F_x=0$, and the sets $F_{xy}$ satisfy
$F_x + F_y =\delta F_{xy}$. More generally, we have:

%As remarked above, a set $F_x$ of this form is always a cocycle, i.e., $\delta F_x=0$, i.e., any $(d+2)$-tuple contains an even number of $(d+1)$-tuples from $F_x$. 

%An easy way to see this is to lift the point set $P$ to $\R^{d+1}$ by adding a new coordinate to each point (for instance, we can choose the new coordinates uniformly at random from $[0,1]$). Then any $d+2$ points $p_0, p_1,\ldots,p_{d+1}$ will lift to the vertices of a  $(d+1)$-simplex $T$. A $d$-simplex spanned by $P$ contains $x$ iff the $d$-simplex spanned by the lifted points intersects the vertical line $\ell_x=x\times \R$ above $x$.
%If we consider the $d+1$-simplex $T$ then the line $\ell_x$ either intersects $T$, in which case it intersects two $d$-simplices on the boundary of $T$ (by general position), or it misses $T$, in which case it also misses all $d$-simplices spanned by vertices of $T$. In either case, the observation has been verified.

%To see this, consider a point $t$ that moves continuously from $x$ to $y$ along the line segment $xy$. Clearly, the set $F_t$ of $d$-simplices of $P$ intersected by $t$ changes only when $t$ moves over a $(d-1)$-simplex $p_{i_0} p_{i_1} \ldots p_{i_{d-1}}$, and it is easy to see that when this happens, $F_t$ changes exactly by adding (i.e., taking the symmetric difference with) the set of all $(d+1)$-tuples containing the $d$-tuple $\{i_0,\ldots, i_{d-1}\}$. The set of all $d$-tuples encountered during the continuous motion is exactly $F_{xy}$, and the claim now follows using the linearity of $\delta$.

\begin{lemma}
\label{lem:intersection-duality}
Let $A=a_0a_1\ldots a_k$ be a $k$-simplex in $\R^d$ that is in general position w.r.t.\ $P$. 
Then
$$\delta F_A=F_{\partial A}:=F_{A_0} + F_{A_1} + \ldots + F_{A_k},$$ 
where $A_i=a_0\ldots a_{i-1} a_{i+1}\ldots a_k$ is the $(k-1)$-face of $A$ obtained by dropping vertex $a_i$.
\end{lemma}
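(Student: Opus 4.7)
The plan is to reduce the identity $\delta F_A = F_{\partial A}$ to a combinatorial duality between two types of face--simplex intersections: for any $(d-k+2)$-tuple $I = \{i_0,\ldots,i_{d-k+1}\}$ and the associated simplex $S = \conv(p_{i_0},\ldots,p_{i_{d-k+1}})$ of dimension $d-k+1$, unfolding the definitions gives $I \in \delta F_A$ iff an odd number of the $(d-k)$-faces of $S$ meet $A$, and $I \in F_{\partial A}$ iff an odd number of the $(k-1)$-faces $A_j$ of $A$ meet $S$. So it suffices to prove that these two parities always agree.

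For the case $k \ge 1$, I would exploit that in general position $A$ (dimension $k$) and $S$ (dimension $d-k+1$) intersect transversely in $\R^d$, so that $M := A \cap S$ is a compact piecewise-linear $1$-manifold with boundary. The general position hypotheses further guarantee that each boundary point of $M$ lies in the relative interior of exactly one proper face---either some $(k-1)$-face $A_j$ of $A$ or some $(d-k)$-face $S_\ell$ of $S$---and that every individual intersection $A_j \cap S$ or $A \cap S_\ell$ is a finite set (by convexity and dimension count, in fact at most one point). Since a compact $1$-manifold has an even number of boundary points,
\[
\sum_j |A_j \cap S| \;+\; \sum_\ell |A \cap S_\ell| \;\equiv\; 0 \pmod 2,
\]
which is exactly the parity agreement needed.

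The edge case $k = 0$, where $A$ is a single point and $\partial A$ is empty so $F_{\partial A} = 0$, reduces to the cocycle identity $\delta F_A = 0$; this can be handled directly via the Radon-type fact that, for any $d+2$ points of $P$ in general position, a generic point of $\R^d$ lies in an even number (in fact $0$ or $2$) of the $d$-simplices they span. The main technical obstacle is to justify rigorously that $M$ really is a $1$-manifold with the clean boundary decomposition above; this amounts to using general position to rule out intersections in codimension-$2$ strata of the face lattices of $A$ and $S$, and is perhaps best handled either by a careful PL transversality argument or by a small generic perturbation of the vertices $a_0,\ldots,a_k$, noting that both sides of the identity are locally constant under such a perturbation.
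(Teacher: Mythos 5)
Your proof is correct and takes essentially the same approach as the paper: the paper likewise observes that in general position $A \cap S$ is a line segment whose two endpoints each lie in the relative interior of exactly one facet of $A$ or of $S$, and deduces the parity identity from the fact that a segment has $0$ or $2$ endpoints, which is exactly your "compact $1$-manifold has an even number of boundary points" argument. Your separate treatment of $k=0$ is also appropriate --- the paper's in-lemma argument tacitly assumes $k\ge 1$ (so that $\sigma$ is a nondegenerate simplex) and records the $k=0$ case, $\delta F_x = 0$, separately just before the lemma.
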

\begin{proof}
Consider a $(d-k+2)$-tuple in $f \subseteq V$ corresponding to a $(d-k+1)$-dimensional simplex $\sigma$ spanned by $P$.
By general position, this $(d-k+1)$-dimensional simplex $\sigma$ is either disjoint from $A$, or it intersects $A$ in a line segment. Each endpoint of this line segment $\sigma \cap A$ is of one of two types: either such an endpoint arises as the intersection of $\sigma \cap A_i$ of $\sigma$ with some facet of $A$, or as the intersection $\sigma_j \cap A$ of $A$ with some facet of $\sigma$. If the intersection $\sigma \cap A$ is empty or if both endpoints are of the same kind then $f$ does not contribute to either side of the claimed identity. If there is one endpoint of each type, then $f$ contributes to both sides of the identity.
\end{proof}

If we apply the preceding lemma to a $(k-1)$-face $A_i$ of $A$, we see that
$$\delta F_{A_i}= \sum_{j\neq i} F_{A_{ij}},$$ where $A_{ij}$ is the $(k-2)$-face of $A$ obtained by dropping vertices $y_i$ and $y_j$.
Iterating this, we see that $A$, together with all its faces, defines a $k$-dimensional simplex in 
$\cocyc{d}$, which we denote by $\Delta_P(A)$. Note that, in particular, the vertices of $\Delta_P(A)$ correspond to the $d$-cocycles $F_{a_i}$ associated with the vertices $a_i$ of $A$.

%It is not hard to check that
%$$\partial \Delta_f(A) =\Delta_f(\partial A):= \Delta_f({A_0})+\ldots + \Delta_f({A_k}),$$ where the boundary operator on the left is the boundary operator of the simplicial set $\cocyc{d}$, and correspondingly, the sum on the right is a formal linear combination (over $\Z_2$) of $(k-1)$-faces of $\cocyc{d}$. We interpret this sum as a $(k-1)$-dimensional subspace, namely the union of all $(k-1)$-faces that appear an odd number of times in the sum.
\smallskip

Now we proceed to define the space $\mathcal{W}$. We fix a $d$-dimensional bounding simplex $B$ that contains $P$ in its interior. We also choose and fix a triangulation of $B$ that is in general position with respect to $P$ and that is 
\emph{sufficiently fine}, in the sense that
\begin{enumerate}
\item[(a)] for every point $q\in \R^d$ there is a vertex $x$ of $\mathcal{T}$ with $F_q=F_x$, and 
\item[(b)] any simplex $A$ in the triangulation of dimension $\dim A=k>0$ intersects $o(n^{d-k+1})$ of the $(d-k)$-simplices of $P$, i.e., $\|F_A\|=o(1)$.
\end{enumerate}
Now we complete this triangulation of $B$ to a triangulation $\mathcal{T}$ of the $d$-dimensional sphere\footnote{The somewhat ad-hoc device of introducing a bounding simplex and passing to a triangulation of the sphere can be avoided by using so-called homology with infinite supports (as Gromov does), but we opted for the ad-hoc method to keep the treatment more elementary.} $\s^d$ by adding a point at infinity and coning from this point over the boundary of $B$; see Figure~\ref{fig:triangulation}.
\begin{figure}[tb]
\begin{center}
\includegraphics{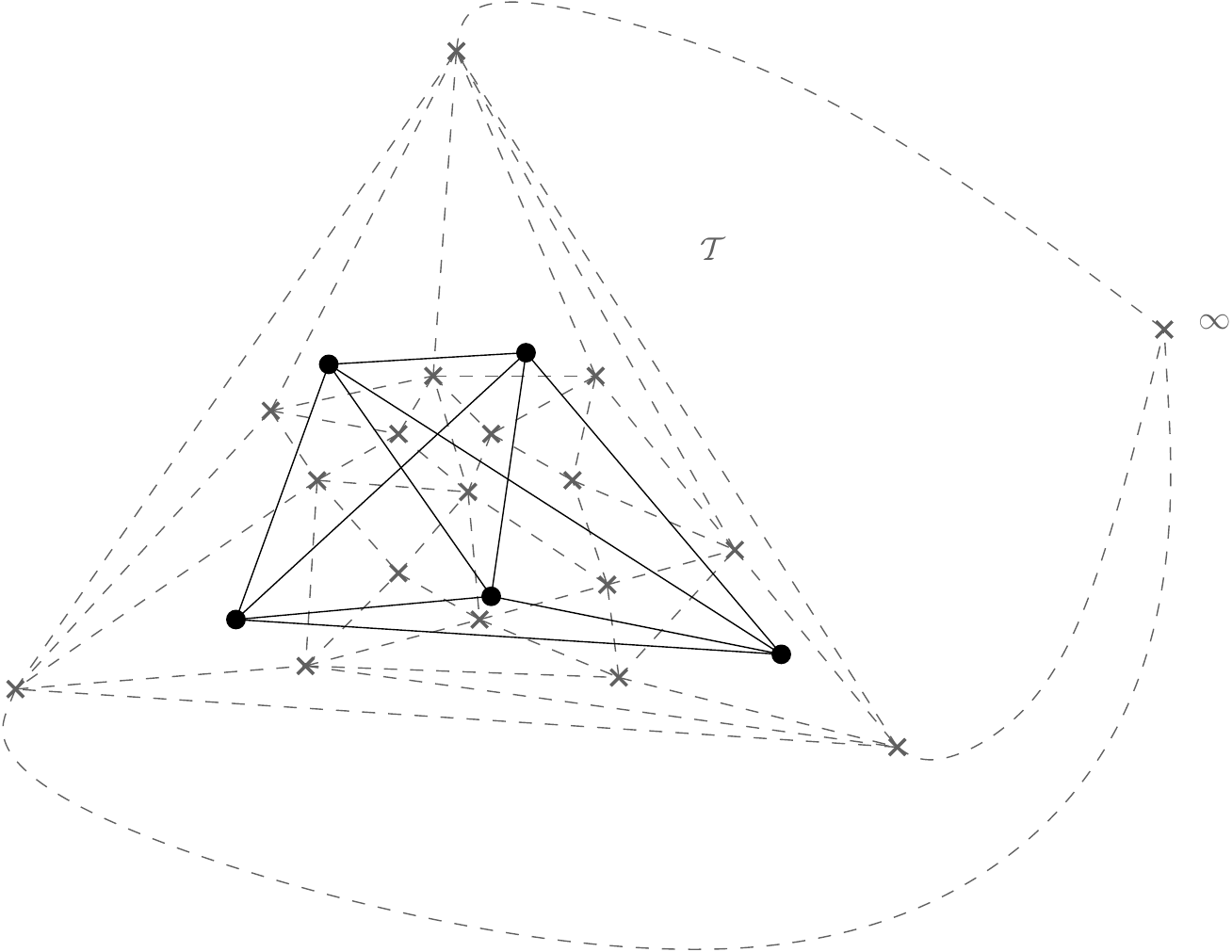}
\caption{\label{fig:triangulation} A set of five points in the plane and the line segments spanned by them (depicted by bold dots and solid segments), and a triangulation $\mathcal{T}$ of a bounding simplex $B$ plus a vertex at infinity (depicted by crosses and dashed lines).}
\end{center}
\end{figure}

We define a subspace $\mathcal{W}$ of $\cocyc{d}$ by taking the formal sum of the $d$-simplices $\Delta_P(A)$ over all $d$-simplices $A$ of $\mathcal{T}$ (note that for all simplices $A$ involving the vertex at infinity, we have $F_A=0$). In other words, a $d$-simplex of $\cocyc{d}$ is included in $\mathcal{W}$ if it is equal to $\Delta_P(A)$ for an odd number of $d$-simplices $A$ of $\mathcal{T}$. (Formally, in homological terminology, $\mathcal{W}$ is a $d$-dimensional simplicial cycle in $\cocyc{d}$.) We stress that $\mathcal{W}$ is determined by the $d$-simplices of $\mathcal{T}$, not by the vertices of $\mathcal{T}$. 

As we have described it, the subspace $\mathcal{W}$ of $\cocyc{d}$ depends not only on $P$, but also on the triangulation $\mathcal{T}$ that we have chosen. It turns out that for any two choices of triangulations, the corresponding subspaces $\mathcal{W}$ are equivalent in a suitable sense (any two such cycles are homologous); see Section~\ref{subsec:abstract}.

\subsection{Nontriviality} 
The key fact upon which Gromov's method hinges is that the subspace $\mathcal{W}$ defined in the previous subsection is always nontrivial, in the following sense:

\begin{quotation}
\textbf{Key Fact.} The subspace $\mathcal{W}$ defined above cannot be contracted inside $\cocyc{d}$.
\end{quotation}
(More formally, one has the stronger statement that $\mathcal{W}$ is homologically nontrivial, i.e., that it is not a homological boundary inside $\cocyc{d}$.)

Gromov deduces this fact from  what he calls \emph{the algebraic version of the Almgren--Dold--Thom theorem} (see \cite[Section~2.2]{Gromov:SingularitiesExpandersTopologyOfMaps2-2010}), but 
%at the moment, we do not fully understand this deduction (which does not seem to be infinitely complicated, though), 
we have not been able to locate the ADT theorem in a suitable form and with proof in the literature.\footnote{The paper by Almgren \cite{Almgren:HomotopyGroupsIntegralCycleGroups-1962} that Gromov cites works in the setting of geometric measure theory and is about integral currents and cycles; an older paper by Dold and Thom \cite{DoldThom-Quasifaserungen-1958} works in the setting of simplicial sets, but only establishes a special case. It may well be that the theorem is well-known and clear to experts in the field, but not to us.} So
 we will treat the Key Fact as a black box in our presentation.

\subsection{Coning in the Space of Cocycles and the Proof of Proposition~\ref{prop:GromovFillingBaranyConstant}}\label{s:coning}

To prove Proposition~\ref{prop:GromovFillingBaranyConstant}, one argues that if $\| F_y\|$ were ``too small'' for all vertices $y$ of the triangulation $\mathcal{T}$, then the space $\mathcal{W}$ could be contracted to a point inside $\mathcal{W}$---a contradiction. 

%\begin{itemize} 
%\item 
In order to show contractibility, we have the following simple \emph{coning argument}: Suppose there is a vertex $o$ in $\cocyc{d}$ such that we can inductively construct a \emph{cone} $o\ast \mathcal{W}$ in $\mathcal{Z}^d$. 
That is, suppose we do the following, by induction on the dimension $i$: For each $i$-simplex $\tau$ that is a face of at least one $d$-face in $\mathcal{W}$, select an $(i+1)$-simplex $o\ast \tau$ in $\cocyc{d}$ that has $\tau$ as an $i$-face and $o$ as the remaining vertex, in such a way that
\begin{equation}\label{eq:coning}
\partial (o \ast \tau)=\tau + o \ast \partial \tau.
\end{equation}
This condition means that our choices for higher-dimensional faces have to be consistent with what we have already committed to for lower-dimensional faces. We note that if $\cocyc{d}$ were just a simplicial complex, then for each $\tau$ there would be either a unique choice for $o\ast \tau$, or none at all, but for simplicial sets, there may be many choices. We also remark that $o\ast \tau$ may be a degenerate simplex, in the sense that $o$ already appears as a vertex of $\tau$.

%If we can perform this coning, it is easy to show by induction on the dimension that $\partial(o\ast \zeta)=\zeta + o\ast \partial \zeta = \zeta$  (since $\partial \zeta=0$), witnessing that $\zeta$ is homologically trivial.

Given a coning, one can contract $\mathcal{W}$ to $o$, by continuously moving each simplex $\tau$ of $\mathcal{W}$ towards $o$ inside $o\ast \tau$.

%\item 
To perform the coning for $\mathcal{W}$, we need to fix a vertex $o$ in $\cocyc{d}$ and, for every  $k$-simplex $A$ of the triangulation $\mathcal{T}$, 
choose a $(k+1)$-simplex $o\ast \Delta_P(A)$ in such a way that the coning condition (\ref{eq:coning}) is satisfied. That is, inductively, for every $k$-simplex $A$ of $\mathcal{T}$, we have to choose a $(d-k-1)$-cochain $F_{Ao} \in \binom{V}{d-k}$ such that 
$$\delta F_{Ao} = F_A + \sum_{B} F_{Bo},$$
where the sum is over all $(k-1)$-faces $B$ of $A$.

Using the {\cofilling} profile of $\Delta^{n-1}$, we show that we can do this if all $\|F_y\|$ are small, thus obtaining a contradiction. 

We choose $o$ to be the vertex of $\cocyc{d}$ corresponding to the zero $d$-cocycle $0$ in $\Delta^{n-1}$. For every vertex $y$ of the triangulation, 
let $F_y$  be the corresponding vertex in $\cocyc{d}$ 
($d$-cocycle in $\Delta^{n-1}$). We pick an arbitrary  
\emph{minimal} $(d-1)$-cochain $F_{yo}$ with $\delta F_{yo} = F_y$ $(=F_y +0)$.
 By minimality, we have $\|F_y\| \geq \varphi_d (\|F_{oy}\|)$.

Next, consider an edge $xy$ in the triangulation $\mathcal{T}$. The corresponding $(d-1)$-cochain $ F_{xy}$ satisfies 
$\delta F_{xy}= F_x + F_y$ (Lemma~\ref{lem:intersection-duality}).
 It follows that  
$$\delta (F_{xy} + F_{xo} + F_{yo})= F_x+F_y+F_x+F_y=0.$$
Now we pick a minimal $(d-2)$-cochain $F_{xyo}$ 
such that $\delta F_{xyo} =  F_{xy} + F_{xo} + F_{yo}$. 
It follows that $\|F_{xy} + F_{xo} + F_{yo}\| \geq \varphi_{d-1}(\|F_{xyo}\|)$. Moreover, by our choice of the triangulation, we
have $\|F_{xy}\|=o(1)$. Thus, up to an $o(1)$ additive error, $\|F_{xo} + F_{yo}\| \geq \varphi_{d-1}(\|F_{xyo}\|)$, hence
$$\max\{\|F_{xo}\|,\| F_{yo}\|\} \geq \tfrac{1}{2} \varphi_{d-1}(\|F_{xyo}\|),$$
and so
$$\max\{ \|F_x\|, \|F_y\|\} \geq \varphi_d(\tfrac{1}{2} \varphi_{d-1}(\|F_{xyo}\|)$$
(where we suppress $o(1)$ additive error terms in both formulas). 
\begin{figure}[tb]
\begin{center}
\includegraphics{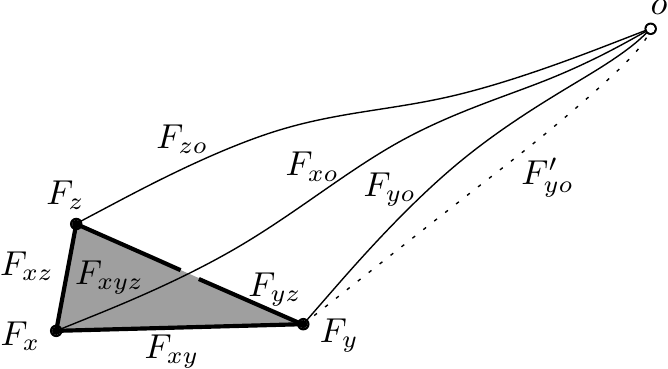}
\caption{\label{fig:coning} An illustration of the coning for the boundary of a triangle $xyz$ of $\mathcal{T}$. We use a simplified labeling of the resulting simplices in $\cocyc{d}$. For example, the $0$-chain $F_{xzo}$ by itself does not determine a triangle of $\cocyc{d}$, but only does so together with the $2$-cocycles $o, F_x, F_z$ and the $1$-chains $F_{xz}, F_{xo}, F_{zo}$ that are already determined. We also stress that the coning involves making choices, e.g. between $1$-cochains $F_{yo}$ and $F_{yo}'$ that determine different edges connecting $F_y$ and $o$, and once we have made a choice, we must stick to it when choosing higher-dimensional faces.}
\end{center}
\end{figure}

In the next step, we consider a triangle $xyz$ of $\mathcal{T}$. 
By our assumption on $\mathcal{T}$, we have $\|F_{xyz}\|=o(1)$.
By the choice of $F_{xyo}$, $F_{yzo}$, and $F_{zxo}$,
and using Lemma~\ref{lem:intersection-duality}, we obtain
$$\delta (F_{xyz} + F_{xyo} + F_{yzo}+F_{zxo})=0,$$
and so we can choose a minimal $(d-3)$-chain $F_{xyzo}$ with 
$$\delta F_{xyzo} =  F_{xyz} + F_{xyo} + F_{yzo}+F_{zxo}.$$
See Figure~\ref{fig:coning} for an illustration. Reasoning as before, we obtain $\max\{ \|F_{xyo}\|,  \| F_{yzo}\| , \|F_{zxo}\|\} \geq \tfrac{1}{3} \varphi_{d-2}(\|F_{xyzo}\|)$, and hence
$$\max\{ \|F_x\|, \|F_y\|, \|F_z\|\} \geq \varphi_d(\tfrac{1}{2} \varphi_{d-1}(\tfrac{1}{3} \varphi_{d-2}(\|F_{xyzo}\|))).$$

We can continue this argument by induction. In the final step, consider a $d$-face $A$ of the triangulation.
Corresponding to it, there is a $0$-cochain $F^d_A$. Moreover, for every $(d-1)$-face $B$ of $A$, we have already constructed a $0$-cochain $F_{Bo}$ such that 
$$ \max_{x\in B} \|F_x\| \geq \varphi_{d}(\tfrac12 \varphi_{d-1}(\tfrac13 \varphi_{d-2}(\ldots \tfrac 1d \varphi_1(\|F_{Bo}\|)\ldots)))$$
%\max_{x\in B\} \|F_x\| \geq \varphi_{d}(\tfrac12 \varphi_{d-1}(\tfrac13 \varphi_{d-2}(\ldots \tfrac 1d \varphi_1(\|F_{Bo}\|)\ldots))).$$
and $\delta (F_a + \sum_B F_{Bo})=0$. Thus, $F_a + \sum_B F_{Bo}$ is a $0$-cocycle, and so
 it must be either $0$ or all of $V$, the whole vertex set of $\Delta^{n-1}$.
 In the former case, we can complete the coning for $A$ by setting 
$F_{Ao}=0$. If we could do this for all $d$-faces $A$ of $\mathcal{T}$, we would
be able to complete the coning, thus reaching a contradiction.

Therefore, there must be a $d$-face $A$ such that $F_A + \sum_B F_{Bo}=V$. Since $\|F_A\|=o(1)$, it follows that for some $B\subset A$, 
we must have $\|F_{Bo}\| \geq \tfrac{1}{d+1}$. Maximizing over all vertices of $\mathcal{T}$, we conclude
$$ \max_{x} \|F_x\| \geq \varphi_{d}(\tfrac12 \varphi_{d-1}(\tfrac13 \varphi_{d-2}(\ldots \tfrac 1d \varphi_1(\tfrac{1}{d+1})\ldots))).$$
%\end{itemize}

This completes (our outline of) the proof of Proposition~\ref{prop:GromovFillingBaranyConstant} in the affine case.

\subsection{Gromov's Method in the Topological Setting}
\label{subsec:abstract}

As was mentioned in the introduction, the setting of an $n$-point set $P \subseteq \R^d$ considered in the previous subsections corresponds to an affine map from the $(n-1)$-simplex $\Delta^n$ into $\R^d$.

Gromov's method applies to much more general situations. More precisely, it allows for the setting to generalized in several ways,
as we now sketch. 
\begin{enumerate}
\item The simplex $\Delta^{n-1}$ can be replaced by an arbitrary (finite) simplicial complex $X$.

What is needed are lower bounds on the {\cofilling} profile of $X$, which is defined as follows:
For $k\geq 0$, let $X_k$ be the set of $k$-dimensional faces of $X$. For each $k$, we have
the coboundary operator of $X$ which maps a subset $E\subseteq X_k$ to 
$$\delta E:=\{f\in X_{k+1}: f \textrm{ contains an odd number of } e\in E\}.$$
We can identify $E \subseteq X_k$ with a $0/1$-vector indexed by $X_k$, i.e., with an element of the vector space $\Z_2^{X_k}$ over the $2$-element field $\Z_2$. In more usual (co)homological terminology,
this latter vector space is denoted by $C^{k}(X;\Z_2)$ and called the space of \emph{$k$-dimensional cochains} (with $\Z_2$-coefficients), and $\delta$ is a linear map $C^k(X;\Z_2)\rightarrow C^{k+1}(X;\Z_2)$.

For a $k$-dimensional cochain $E$, we define $\|E\|:=|E|/|X_k|$ as the normalized support size of $E$ as before,
and 
$$
\varphi^X_d(\alpha):=\min \{\|\delta E\|: E\in {\textstyle C^{\ast}(X;\Z_2)} 
\mbox{ minimal}, \|E\|\ge \alpha\},
$$
where $E$ is minimal if $\|E\| \leq \|E+ \delta D\|$ for all $D\in C^\ast(X;\Z_2)$.

The space $\cocyc{d}(X)$ of $d$-dimensional cocycles of $X$ is defined completely analogously to the case of the simplex.

\item The target space $\R^d$ can be replaced by an arbitrary triangulated $d$-dimensional manifold $Y$, or, even more generally, a $\Z_2$-homology manifold. What we need is to be able to compute \emph{intersection numbers} (modulo 2) between $k$-dimensional chains and $(d-k)$-dimensional chains. Equivalently, we need that Poincar\'e duality (with $\Z_2$ coefficients) holds in~$Y$.

\item Instead of affine maps, we can allow for
arbitrary continuous maps $T\colon X\rightarrow Y$. Without loss of generality, one can think of $T$ as a piecewise linear map in general position. This is not necessary for the argument, but may help the reader's intuition. For instance, for such a map, the $T$-image of any $(d-k)$-simplex $\sigma$ of $X$ intersects any $k$-simplex $A$ of $Y$ in a finite number of points in the relative interior of $A$, and the (algebraic, $\Z_2$) intersection number of $T(\sigma)$ and $A$ is defined as the number of intersection points modulo~$2$.

\item Finally, instead of cohomology with $\Z_2$-coefficients, one can work with
other coefficient rings in the argument. Potentially, this might lead
to stronger bounds. We will not discuss this generalization, since
on the one hand, it is straightforward, and on the other hand, we would
have to talk about orientations everywhere.
\end{enumerate}

The basic structure of the proof remains the same, but we have to change the definition of the cochains $F_A$ appropriately, as follows: If $A$ is a $k$-dimensional simplex in general position, one defines  $F_A$ as the set of $(d-k)$-simplices of $X$ whose images under $T$ have odd intersection number with $A$.

Another way of interpreting this construction is as follows: Every $k$-simplex $A$ defines, via Poincar\'e duality, a $(d-k)$-cochain in $Y$, and this $(d-k)$-cochain pulls back under $T$ to a cochain $F_A$ in $X$. 

The basic identity
\begin{equation}
\label{eq:boundary-coboundary-intersection}
\delta F_A=F_{\partial A}:=F_{A_0} + F_{A_1} + \ldots + F_{A_k}
\end{equation}
still holds. (This just says that on the level of chains and cochains, Poincar\'e duality exchanges boundary and coboundary operators).

In other words, every $k$-simplex $A$ in general position defines a $k$-simplex $\Delta_T(A)$ in $\cocyc{d}(X)$ via the intersection number construction. If $Y$ is compact, i.e., has a finite triangulation, then the subspace $\mathcal{W}=\mathcal{W}(T) \subseteq \cocyc{d}(X)$ is defined as the formal $\Z_2$-linear combination of the $d$-simplices $\Delta_T(A)$, where $A$ ranges over all $d$-simplices in the triangulation of $Y$. In other words, a $d$-simplex of $\cocyc{d}(X)$ belongs to $\mathcal{W}$ if it equals $\Delta_T(A)$ for an odd number of $d$-simplices $A$ of the triangulation of $Y$.

An equivalent way of defining $\mathcal{W}$ this is as follows: The basic identity (\ref{eq:boundary-coboundary-intersection}) implies that the map $A\mapsto \Delta_T(A)$ commutes with the boundary operator, i.e., it is a chain map (with $\Z_2$-coefficients) from $Y$ to $\cocyc{d}(X)$. Thus, this map also induces a map in homology.

Let $[Y]$ denote the fundamental $d$-dimensional homology class (over $\Z_2$) of $Y$. If we fix a triangulation of $Y$, we can take a representative $d$-cycle for $[Y]$ that is the formal $\Z_2$-linear combination of all the $d$-simplices of the triangulation. (If $Y$ is not compact, as in the case $Y=\R^d$, then we have to work with homology with infinite supports.)  Then $\mathcal{W}=\mathcal{W}(f)$ is defined as the image under the map $\Delta_T$ of $[Y]$, i.e., formally, it is a $d$-dimensional cycle (with $\Z_2$-coefficients) in $\cocyc{d}(X)$.

As before, the Almgren--Dold-Thom Theorem implies that this cycle is homologically nontrivial, so it cannot be contracted. 

On the other hand, if every point of $Y$ were covered by the $T$-images of ``too few'' $d$-simplices of $X$, then the same combinatorial coning as before would yield a contradiction (with the precise meaning of ``too few'' depending on the {\cofilling} profile of $X$). Again, the combinatorial coning can also be viewed as an actual topological contraction of $\mathcal{W}$, considered as a subspace of $\cocyc{d}(X)$, to the point $o$. 

If $Y$ is unbounded, or if we are guaranteed that there is a point of $Y$ that is not covered by the image of $T$, then the same argument as before shows that
$$ \max_{y} \|F_y\| \geq \varphi^X_{d}(\tfrac12 \varphi^X_{d-1}(\tfrac13 \varphi^X_{d-2}(\ldots \tfrac 1d \varphi^X_1(\tfrac{1}{d+1})\ldots))),$$
where the maximum is over all vertices $y$ of the triangulation of $Y$.
(If $Y$ is not unbounded and if every point in $Y$ is covered by the $T$-images of some $d$-simplices of $X$, then we have to choose the apex $o$ of the coning differently (not as the empty cocycle), which yields a weaker bound.)

In particular, if $X$ is a \emph{$\Z_2$-cohomological expander}, i.e., if $\varphi_i^X(\alpha)/\alpha$ is bounded away from zero for all $i$, then there is some point of $Y$ that is covered by a positive fraction of all $d$-simplices of $X$ (with a constant depending on the cofilling profile of $X$).

We remark that for the coning argument, we again choose the triangulation $\mathcal{T}$ to be sufficiently fine with respect to the map $T$, i.e., we assume that for every simplex $A$ of the triangulation of dimension $\dim A >0$, we have $\|F_A\|=o(1)$. If the map $T$ is very complicated (i.e., if we need a very fine subdivision of $X$ to approximate $T$ by a PL map), then the triangulation $\mathcal{T}$ may require a huge number of simplices, so it is important that the whole argument is completely independent of the number of simplices in the triangulation $\mathcal{T}$.

\section{The {\cofilling} profile in the \boldmath$d=2$ case}
\label{sec:cofilling-2}

Here we prove Theorem~\ref{thm:cofilling-2}, which asserts that
$\varphi_2(\alpha) \ge f(\alpha):=\tfrac34\left(1-\sqrt{1-4\alpha}\right)(1-4\alpha)$.

Since $d=2$, we deal with the size of the coboundary
for an edge set $E$ of a graph. The minimality of $E$ means that
\emph{no edge cut has density more than~$\frac12$} in this graph;
in other words, for every $S\subseteq V$, the number of edges
of $E$ going between $S$ and $V\setminus S$ is at most 
$\frac 12|S|\cdot|V\setminus S|$.

As we have remarked at the end of Section~\ref{s:seidel},
the minimality of $E$ is a complicated property, computationally
hard to test, for example. So we will only use it for
singleton sets $S$.
Thus, we will actually show that $\|\delta E\|\ge
f(\alpha)$ (ignoring terms tending to $0$ as $n\to\infty$)
for every $E$ with $\|E\|\ge\alpha$ and
$\deg_E(v)\le \frac n2$ for all $v\in V$ (where $\deg_E(v)$ denotes
the number of neighbors of $v$ in the graph $(V,E)$).

Before proceeding with the proof of Theorem~\ref{thm:cofilling-2},
let us remark that this relaxation (i.e., ignoring
all non-singleton $S$) already prevents us from obtaining
a tight bound for $\varphi_2$. For example, let us partition $V$ into
sets $V_1,V_2,V_3$, where $|V_1|=\alpha n$ and $|V_2|=\frac n2$,
and let $E$ consist of all edges connecting
$V_1$ to $V_2$.
This $E$ is not minimal, but it does satisfy the degree condition.
One easily checks that $\|E\|=\alpha$ and $\|\delta E\|=3 \alpha(\frac12-\alpha)=\frac 32\alpha-3\alpha^2$, which is smaller than the suspected
tight bound for $\varphi_2$ from Proposition~\ref{p:ubbb}.
However, at least the leading term is correct.

Now we proceed with the proof. Given $E$, let $m_i$
denote the number of triples $f\in {V\choose 3}$ that contain
$i$ edges of $E$, $i=1,2,3$; we have $|\delta E|= m_1+m_3$ by
definition. An easy inclusion-exclusion consideration
shows that
\begin{equation}\label{e:pieform}
|\delta E| = (n-2)|E|-\sum_{v\in V}\deg_E(v)(\deg_E(v)-1)+ 4t,
\end{equation}
where $t$ denotes the number of triangles in the graph $(V,E)$.
Indeed, to check (\ref{e:pieform}), it suffices to discuss
how many times a triple $f\in{V\choose 3}$ containing 
exactly $i$ edges $e\in E$ contributes to the right-hand side,
$i=1,2,3$. For $i=1$, such an $f$ is only counted once in the term
$(n-2)|E|$, which counts the number of ordered pairs $(e,v)$,
where $e\in E$ and $v\in V\setminus e$.  A triple $f$ with $i=2$
is counted twice in the term $(n-2)|E|$, but it is also counted twice
in $\sum_{v\in V}\deg_E(v)(\deg_E(v)-1)$, and thus its total contribution is
zero. Finally, for $i=3$, where $f$ induces a triangle,
it is counted  three
times in $(n-2)|E|$, six times in $\sum_{v\in V}\deg_E(v)(\deg_E(v)-1)$,
and four times in $4t$, so altogether it contributes $+1$
as it should.

%\heading{A lower bound via squared degrees. } Using inclusion-exclusion,
%we can express $\|\delta E\|=\|\oneedge\|+\|\triangle\|$
%as $\|\overline{\oneedge}\|-2\|\overline{\fork}\|+4\|\triangle\|$,
%where $\|H\|$ means the number, relative to 
%${n\choose 3}$, of \emph{induced} copies of $H$
%in $E$ and $\|\overline H\|$ means the relative number of \emph{noninduced}
%copies. 
%
%Now $\|\overline{\oneedge}\|=n|E|/{n\choose 3}=3\alpha$.
%We ignore the term with triangles, making the bound only smaller,
%and we use $2\|\overline{\fork}\|={n\choose 3}^{-1} \sum_v\deg_E(v)^2$.
%Thus, we need to \emph{maximize} $\sum_v\deg_E(v)^2$.

As the next simplification, we will ignore the triangles,
as well as the difference between $\deg_E(v)(\deg_E(v)-1)$
and $\deg_E(v)^2$, and we will
use (\ref{e:pieform}) in the form
\begin{equation}\label{e:truncpie}
|\delta E| \ge (n-2)|E|-\sum_{v\in V}\deg_E(v)^2.
\end{equation}
Since $|E|$ is given, it remains to maximize $\sum_{v\in V}\deg_E(v)^2$,
which is done in the next lemma.

\begin{lemma}\label{l:lobo2} 
Let $\alpha\le \frac 14$.
Let $(V,E)$ be a graph on $n$ vertices
with $|E|=\alpha{n\choose 2}$ and $\deg_E(v)\le \frac n2$ for all
$v\in V$. Then
$$
\sum_{v\in V}\deg_E(v)^2 \le \left(\tfrac\sigma4
(1+2\sigma-4\sigma^2)+o(1)\right)n^3,
$$
where $\sigma=(1-\sqrt{1-4\alpha})/2$.
\end{lemma}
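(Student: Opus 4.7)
The plan is to maximize $\sum_v d_v^2$ (where $d_v=\deg_E(v)$) over all graphs $(V,E)$ with $|V|=n$, $|E|=\alpha\binom{n}{2}$, and $d_v\le n/2$, and then to evaluate this maximum. I would begin with a \emph{shifting} argument: given distinct vertices $u,v,w$ with $u\sim w$, $v\not\sim w$, and $d_v<n/2$, replacing the edge $uw$ by $vw$ preserves $|E|$ and the degree cap, while changing $\sum_v d_v^2$ by $(d_v+1)^2+(d_u-1)^2-d_v^2-d_u^2 = 2(d_v-d_u)+2 \ge 2 > 0$ whenever $d_v\ge d_u$. Hence in any extremal graph no such shift is available, which means that for every pair $u,v$ with $d_u\le d_v<n/2$ one must have $N(u)\setminus\{v\}\subseteq N(v)$.

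Next, I would argue that this ``nested-neighborhood'' condition, combined with the fact that the capped vertices (those with $d_v=n/2$) lie outside the scope of the shift, forces the extremal graph to take the explicit form $V=S\cup T\cup I$, where $S$ is a clique of size $s$ consisting precisely of the capped vertices, $T$ is a set of size $t$ completely joined to $S$ with no edges among themselves, and $I$ consists of isolated vertices. The cap condition $s-1+t=n/2$ gives $s+t=n/2+1$, and the edge-count equation $\binom{s}{2}+st=\alpha\binom{n}{2}$ becomes, in the continuous parameter $\sigma=s/n$, the relation $\sigma(1-\sigma)=\alpha+O(1/n)$. Hence $\sigma=(1-\sqrt{1-4\alpha})/2 + o(1)$, which lies in $[0,\tfrac12]$ since $\alpha\le\tfrac14$.

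Finally, I would compute
\[
\sum_v d_v^2 \;=\; s\,(n/2)^2 + t\,s^2 \;=\; n^3\Bigl[\tfrac{\sigma}{4}+\sigma^2\bigl(\tfrac12-\sigma\bigr)\Bigr] + o(n^3) \;=\; \tfrac{\sigma}{4}\bigl(1+2\sigma-4\sigma^2\bigr)\,n^3 + o(n^3),
\]
which matches the claimed bound. The main obstacle is the middle step: the nested-neighborhood condition is purely local, and extracting the global $S\cup T\cup I$ decomposition from it requires careful case analysis, in particular handling the ``twin'' structure forced on same-degree non-cap vertices (identical neighborhoods in $S$) and the interaction with the capped vertices, to which the shifting operation does not symmetrically apply.
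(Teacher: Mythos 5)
Your shifting argument correctly yields a nested-neighborhood condition among the \emph{non-capped} vertices, and the final computation from the $S\cup T\cup I$ structure is right, but the gap you flag as the ``main obstacle'' is a genuine one: the ``no improving single shift'' condition does \emph{not} force that structure. A single edge shift can never increase the degree of a capped vertex (blocked by the cap) and never profitably decrease it (it already has maximal degree), so the argument is silent about adjacencies among capped vertices. Here is a concrete counterexample: take $n=100$, let $S$ be a set of $10$ vertices, $T$ a set of $50$ vertices, $I$ the remaining $40$, and let $E$ consist of all $S$--$T$ edges and nothing else. Then $\alpha = 500/\binom{100}{2}\approx 0.101\le\frac14$, every vertex of $S$ has degree $50=n/2$, every $T$-vertex has degree $10$ with $N(u)=N(v)=S$ for all $u,v\in T$, and one checks no single shift of your kind improves $\sum_v d_v^2$: the only improving direction, from $T$ into $S$, is blocked by the cap; shifts among $T$-vertices have no available $w$ because all $T$-neighborhoods coincide; and shifts from $S$ into $T$ or from $T$ into $I$ strictly decrease the sum. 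Yet $\sum_v d_v^2 = 10\cdot 50^2 + 50\cdot 10^2 = 30000$, which is strictly below the value achieved by a clique $S'$ of size $11$ completely joined to a set $T'$ of size $40$ (there $\sum_v d_v^2 = 11\cdot 2500 + 40\cdot 121 = 32340$, with even slightly fewer edges). So your shifting argument gets stuck at a non-extremal local maximum, and the edge-count equation $\binom{s}{2}+st=\alpha\binom{n}{2}$, hence the whole evaluation, is unjustified.

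The paper escapes this trap by augmenting the single shift with a degree-preserving $2$-switch (its step (ii)): if two capped vertices $v_i,v_j$ are non-adjacent, choose $\ell,m$ outside the capped set with $v_i\sim v_\ell$, $v_j\sim v_m$, $v_\ell\not\sim v_m$, and swap $\{v_i,v_\ell\},\{v_j,v_m\}$ for $\{v_i,v_j\},\{v_\ell,v_m\}$. This leaves all degrees (and hence $\sum_v d_v^2$) unchanged but strictly increases a secondary objective, the number of edges inside the set of capped vertices; crucially it also creates a new edge $\{v_\ell,v_m\}$ between two low-degree vertices, which a subsequent single shift of exactly your type can then exploit to \emph{strictly} increase $\sum_v d_v^2$. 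Without this extra move and the interleaving with step (i), the clique structure on the capped vertices cannot be concluded. If you add the $2$-switch with the secondary objective to your argument, it essentially becomes the paper's proof.
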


\heading{Proof. } Let $(V,E)$ be the given graph with $n$ vertices and $|E|=\alpha\binom{n}{2}$ edges.
By a sequence of transformations that do not change the number of edges and that do not 
decrease  the sum of squared degrees, we convert it to a particular
form.

Let us number the vertices $v_1,\ldots,v_n$ so that
$d_1\ge d_2\ge \cdots\ge d_n$, where $d_i:=\deg_E(v_i)$.
We note that for $d_i\ge d_j$, we have $(d_i+1)^2+(d_j-1)^2 > d_i^2+d_j^2$, 
and thus a transformation that changes $d_i$ to $d_i+1$ and $d_j$ to $d_j-1$ and 
leaves the rest of the degrees unchanged increases the sum of squared degrees.

For an edge $e=\{v_i,v_j\}$ with $i<j$, we call $v_i$
the \emph{left end} of $e$ and $v_j$ the \emph{right end}. 

\begin{enumerate}
\item[(i)]
 {\em Let $k$ be such that $d_1=d_2=\cdots=d_k=\lfloor \frac n2\rfloor$,
while $d_{k+1}<\lfloor \frac n2\rfloor$. Then we may assume
that the left ends of all edges are among $v_1,\ldots,v_{k+1}$.}

Indeed, if there is an edge $\{v_i,v_j\}$, $i>k+1$,
we can replace it with the edge $\{v_{k+1},v_j\}$. 
This increases $\sum_{i=1}^{k+1} d_i$ (and possibly increases
$k$), so after finitely many
steps, we achieve the required condition.

\item[(ii)]
 {\em  We may assume that every two vertices among $v_1,\ldots,v_k$ are connected.}

Proof: We may assume that (i) holds.
Since we assume $\alpha\le \frac14$, we have $k\le \lfloor \frac n2\rfloor$.
Let us suppose $\{v_i,v_j\}\not\in E$, $1\le i<j\le k$.
Since $d_i=d_j=\lfloor \frac n2\rfloor$, each of $d_i,d_j$
is connected to at least two vertices among $v_{k+1},\ldots,v_n$.
So we may assume $\{v_i,v_{\ell}\}\in E$, $\{v_j,v_m\}\in E$,
$\ell,m\ge k+1$, $\ell\ne m$. We also have $\{v_{\ell},v_m\}\not\in E$
(according to (i)). Thus, we can delete the edges
$\{v_i,v_{\ell}\}$ and $\{v_j,v_{m}\}$ and add the edges
$\{v_i,v_j\}$ and $\{v_{\ell},v_m\}$.
\immfig{eswitch}
This increases the number of edges on $\{v_1,\ldots,v_k\}$,
which cannot decrease by the transformations in (i), so after
finitely many steps, we achieve both (i) and (ii).
 
\item[(iii)] {\em We may assume that the right neighbors of each $v_i$, $1\le i\le k$, 
form a contiguous interval $v_{i+1},v_{i+2},\ldots,v_{\lfloor n/2\rfloor+1}$.}

Indeed, if $v_i$ is connected to some $v_{\ell+1}$ and not to $v_{\ell}$,
$\ell>k$, we can replace the edge $\{v_i,v_{\ell+1}\}$
with $\{v_i,v_{\ell}\}$. This increases the sum of squared vertex degrees,
and thus after finitely many steps, we can achieve (i)--(iii).
\end{enumerate}

A graph satisfying (i)--(iii) is almost completely determined
by its number of edges, except possibly for the neighbors
of the vertex $v_{k+1}$. Each of $v_1,\ldots,v_k$
is connected to the first $\frac n2$ vertices,
and there are no other edges, except possibly for those
incident to $v_{k+1}$.  Counting the left ends of edges,
we have $\alpha{n\choose2}=|E|=kn/2-{k\choose2}+O(n)$.

Writing $k=\sigma n$, we obtain  $\sigma=(1-\sqrt{1-4\alpha})/2+o(1)$.
%same as in the upper bound example above. 
%(Incidentally,
%this $E$ is also minimal,
%but $\delta E$ is larger than in the upper bound example, due to the many induced triangles which we have ignored in the square degree analysis.)
The sum of the squared degrees is then
$k\frac{n^2}4+(\frac n2-k)k^2+O(n^2)=\left(
\frac\sigma 4(1+2\sigma-4\sigma^2)+o(1)\right)n^3$.
Lemma~\ref{l:lobo2} is proved.
\proofend

\medskip
Theorem now follows immediately from Lemma~\ref{l:lobo2} using (\ref{e:truncpie}). \proofend

\heading{A promising relaxation? }
As we have seen, in order to establish the tightness
of the upper bound from Proposition~\ref{p:ubbb},
one has to use the minimality condition in a stronger way
than we did in the above proof. On the other hand, it 
seems possible that the other relaxation we have made in that
proof, namely, ignoring triangles, need not cost us anything.
In other words, while 
in $\delta E$ we count triples containing 1 or 3 edges,
perhaps the example in Proposition~\ref{p:ubbb}
also minimizes, over all all minimal $E$ of a given size, 
the number of triples containing exactly one edge.
%the triples with exactly one edge $\oneedge$
%(and we minimize their number). Perhaps the above example
%also minimizes (over all minimal $E$) the number of $\oneedge$.
This might be easier to prove, and triangles would be dealt with
implicitly, since the example has no triangles.

\heading{On Gromov's ``$\frac23$-bound''. }
Sec.~3.7 of ~\cite{Gromov:SingularitiesExpandersTopologyOfMaps2-2010}
claims the lower bound
$\|(\partial^1)^{-1}_{\rm fil}\|(\beta)\le\frac2{3(1-\sqrt{\beta})}$
(which would yield
$
\varphi_2(\alpha) \ge \tfrac 32\alpha -(\tfrac 32)^{3/2}\alpha^{3/2}+\tfrac98\alpha^2-O(\alpha^{5/2})
$.

The argument as given doesn't seem to work, however 
(although it is also possible that we misunderstood something).
It is supposed to be based on an inequality 
(the fifth displayed formula
in Sec.~3.7, derived from the Loomis--Whitney inequality),
which seems correct and is re-stated for $i=1$ two lines below.
In the language of graphs, the $i=1$ case is equivalent to
$\sum_{\{u,v\}\in{V\choose 2}} \deg_E(u)\deg_E(v)\le 2\frac {n-1}n|E|^2$.

However, the proof of the ``$\frac23$-bound'' below
seems to employ a similar inequality for $i=2$, which would
claim that $\sum_{\{u,v\}\in{V\choose 2}}\sqrt{\deg_E(u)\deg_E(v)}\le
2|E|^{3/2}$. This is false, though (a graph as in the upper bound
example, i.e., a complete bipartite graph with parts of very
unequal size, is a counterexample). Probably this kind of proof
can be saved, since it seems sufficient to take the
last sum over $\{u,v\}\in E$, instead of all pairs, and then
such an inequality is apparently true (but doesn't seem
to follow from Loomis--Whitney in a direct way).

\section{The {\cofilling} profile  for \boldmath$d>2$}
\label{sec:cofilling-ge2}

%\emph{Uniform random} $E$ gives $\varphi_d(\alpha)\le d\alpha(1-\alpha)^{d}$.

%Less ambitiously, one might try to prove
%$\varphi_d(\alpha)=\frac{d+1}d\alpha-o(\alpha)$ as $\alpha\to 0$.

%\heading{Bounding ridge degrees is not enough. }
%In analogy with the $d=2$ case, one might try a relaxation
%where, instead of minimality, we want that every \emph{ridge}
%(i.e., $(d-1)$-tuple) has degree at most $\frac n2$. For $d>2$,
%however, this relaxation is useless.
%Consider a complete $d$-partite example, where
%say one of the classes is small and the others all
%have fewer than $\frac n2$ vertices. This is a coboundary
%and thus has zero coboundary, but the ridge degrees can be
%bounded by roughly $\frac n{d-1}$.

In this section we prove Theorem~\ref{t:3ub}, a lower bound
on $\varphi_3$. We begin with an auxiliary fact concerning
links of vertices.

\begin{obs}\label{o:minlink}
 If $E\subseteq {V\choose d}$ is a minimal
system, then $\lk(v,E)$ is also minimal, for every vertex $v\in V$.
\end{obs}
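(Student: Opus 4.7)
The plan is to use the alternative characterization of minimality from Section~\ref{sec:basics}: a cochain $E$ is minimal (on ground set $V$) if and only if $\|E\|\le \|E+\delta D\|$ for every $D\subseteq \binom{V}{d-1}$. So fix a vertex $v\in V$, set $L:=\lk(v,E)\subseteq\binom{V\setminus\{v\}}{d-1}$, and let $D'\subseteq\binom{V\setminus\{v\}}{d-2}$ be arbitrary; we must show $\|L\|\le \|L+\delta_{V\setminus\{v\}} D'\|$ (normalized over $V\setminus\{v\}$).

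The key idea is to \emph{lift} $D'$ to a test cochain on $V$, namely
$$D:=\{d'\cup\{v\}: d'\in D'\}\subseteq \binom{V}{d-1}.$$
I would then compute $\delta D$ explicitly. Since every element of $D$ contains $v$, a $d$-tuple $e\subseteq V$ can contain an element of $D$ only if $v\in e$; and for $e=\{v\}\cup e'$ with $e'\in\binom{V\setminus\{v\}}{d-1}$, the number of elements of $D$ contained in $e$ equals the number of elements of $D'$ contained in $e'$. Hence $\delta D$ consists entirely of $d$-tuples through $v$, and
$$\lk(v,\delta D)=\delta_{V\setminus\{v\}} D'.$$

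Combining this with the description $E_v=\{\{v\}\cup \ell:\ell\in L\}$, one sees that $E+\delta D$ agrees with $E$ outside $E_v$, while the link of $v$ in $E+\delta D$ is $L+\delta_{V\setminus\{v\}}D'$. Since the $d$-tuples of $E+\delta D$ containing $v$ are in bijection with the elements of this link,
$$|E+\delta D|-|E|=|L+\delta_{V\setminus\{v\}}D'|-|L|.$$
The minimality of $E$ on $V$ gives $|E+\delta D|\ge|E|$, so $|L+\delta_{V\setminus\{v\}}D'|\ge|L|$, proving minimality of $L$ on the ground set $V\setminus\{v\}$.

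There is no real obstacle; the only point that needs care is that minimality is always relative to a fixed ground set (cf.\ the warning in Section~\ref{sec:basics}), so the lift $D\mapsto D\cup\{v\}$ and the observation that $\delta D$ lives entirely in the star of $v$ are precisely what translates between minimality on $V$ and minimality on $V\setminus\{v\}$.
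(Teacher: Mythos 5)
Your proof is correct and proceeds by essentially the same mechanism as the paper's: lift a test cochain on $V\setminus\{v\}$ by coning with $v$ (your $D=D'\ast v$, the paper's $D=C\ast v$), use the identity $\delta(D'\ast v)=(\delta_{V\setminus\{v\}}D')\ast v$ so that $\delta D$ lies entirely in the star of $v$, and apply the minimality of $E$ on $V$. The only cosmetic difference is the choice of the two equivalent formulations of minimality (you use $\|E\|\le\|E+\delta D\|$; the paper uses ``$E$ contains at most half of every $\delta D$''), and you are somewhat more careful than the paper in making explicit that what is obtained is minimality of $\lk(v,E)$ relative to the ground set $V\setminus\{v\}$, which then subsumes minimality relative to $V$.
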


\heading{Proof. } 
For a set system $F$ on $V$, let us write $F_{\setminus v}:=
\{s\in F:v\not\in F\}$.

We want to verify that for each $C\subseteq {V\choose d-2}$,
$\lk(v,E)$ contains at most half of $\delta C$. The sets of
$\lk(v,E)$ do not contain $v$, and so only sets of $(\delta C)_{\setminus v}$
may belong to $\lk(v,E)$. But we have $(\delta C)_{\setminus v} =
(\delta C_{\setminus v})_{\setminus v}$, and so we may restrict our attention
to systems $C$ whose sets all avoid~$v$.

Let $D:=C*v:=\{c\cup \{v\}:c\in C\}$. We have $\delta D=
((\delta C)_{\setminus v})*v$. Now $E$ contains at most half of the sets
of $\delta D$ by minimality, so $\lk(v,E)$ contains
at most half of the sets of $(\delta C)_{\setminus v}$ as claimed.
\proofend
\bigskip

In the proof of Theorem~\ref{t:3ub}, we consider a minimal
system $E\subseteq {V\choose 3}$, $\|E\|=\alpha$,
and we want to show that it has a large coboundary.
Conceptually, the proof splits into two cases:
The first one deals with the situation where most of the 
triples $e\in E$ are incident to vertices of
very large degrees. The second one concerns the situation
where the maximum vertex degree is not much larger than the
average vertex degree.

\heading{Dealing with high-degree vertices. }
We begin with the first case, with a significant share
of high-degree vertices.  Here we rely on the basic cofilling bound,
which we are going to apply to the links
of high-degree vertices (the links are minimal by 
Observation~\ref{o:minlink}). From the sets in the coboundaries of the links
we are going to obtain sets of $\delta E$; some care is needed
to avoid counting a single $f\in \delta E$ several times.

It is interesting to note that in this way we get a lower bound for
$\varphi_3$ that has a correct limit behavior as $\alpha\to 0$, 
 although for the links we employ
the basic cofilling bound, which is far from correct
for small $\alpha$'s. This can be explained as follows:
for those systems $E$ that are near-extremal for $\varphi_3$,
the relevant vertex links are so large that the basic cofilling bound
is almost   tight for them.

We present the first part of the proof 
for $d$-tuples instead of triples, since specializing
to triples would not make the argument any simpler.

\begin{lemma}\label{l:highdeg}
 Let $E$ be a minimal
system of $d$-tuples on $V=\{v_1,v_2,\ldots,v_n\}$,
let $\alpha:=\|E\|$,
let $r=\beta n$ be a parameter, and let 
$E_{\rm hi}\subseteq E$ consist of those $e\in E$ that
contain at least one vertex among $v_1,\ldots,v_r$.
Let $F_{\rm hi}$ be the set of those $f\in\delta E$ that contain 
at least one vertex among $v_1,\ldots,v_r$.
Then
$$
\|F_{\rm hi}\| \ge \frac {d+1}d \alpha_{\rm hi} -
 \frac{(d+1)d}2 \beta^2-(d+1)\alpha\beta-
O(n^{-1}), 
$$
where $\alpha_{\rm hi}:=\|E_{\rm hi}\|$.
\end{lemma}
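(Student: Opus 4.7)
The plan is to estimate $|F_{v_i}|$ for each high-degree vertex $v_i$ individually by applying the basic cofilling bound (Lemma~\ref{lem:basic}) to its link $L_i:=\lk(v_i,E)$, and then to aggregate these estimates into a lower bound on $|F_{\rm hi}|=\bigl|\bigcup_{i\le r}F_{v_i}\bigr|$ via Bonferroni-style inclusion-exclusion. By Observation~\ref{o:minlink}, each $L_i$ is a minimal $(d-1)$-cochain on $V$, so Lemma~\ref{lem:basic} gives $|\delta L_i|\ge |L_i|(n-d+1)/d=|E_{v_i}|(n-d+1)/d$. Combining this with the link identity $\delta L_i=E_{v_i}+\lk(v_i,\delta E_{v_i})$ from~(\ref{e:link-cobo})---whose two summands are disjoint according to whether they contain $v_i$---one extracts $|\lk(v_i,\delta E_{v_i})|\ge |E_{v_i}|(n-2d+1)/d$.

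Next I would pass from $\lk(v_i,\delta E_{v_i})$ to the link of $\delta E$ itself. A direct parity count shows that for $g\in\binom{V\setminus \{v_i\}}{d}$, the $(d+1)$-set $\{v_i\}\cup g$ lies in $\delta E$ iff $g$ belongs to the symmetric difference $\lk(v_i,\delta E_{v_i})\,\triangle\,E_{\setminus v_i}$ (where $E_{\setminus v_i}:=E\setminus E_{v_i}$): the second summand records the unique $d$-subset of $\{v_i\}\cup g$ that avoids $v_i$, namely $g$ itself. By the triangle inequality for symmetric differences,
$$|F_{v_i}|\ge |\lk(v_i,\delta E_{v_i})|-|E_{\setminus v_i}|\ge |E_{v_i}|(n-d+1)/d-|E|.$$

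I would then sum over $i=1,\ldots,r$; since $\sum_{i\le r}|E_{v_i}|=\sum_{e\in E}|e\cap\{v_1,\ldots,v_r\}|\ge |E_{\rm hi}|$, this yields $\sum_{i\le r}|F_{v_i}|\ge (n-d+1)|E_{\rm hi}|/d-r|E|$. To convert this into a lower bound on $|F_{\rm hi}|$, Bonferroni's inequality gives $|F_{\rm hi}|\ge \sum_{i\le r}|F_{v_i}|-\sum_{i<j\le r}|F_{v_i}\cap F_{v_j}|\ge \sum_{i\le r}|F_{v_i}|-\binom{r}{2}\binom{n-2}{d-1}$, using the trivial fact that every $(d+1)$-set containing a pair $v_i,v_j$ is determined by the choice of $d-1$ further vertices. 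Normalizing by $\binom{n}{d+1}$ and substituting $r=\beta n$, $|E|=\alpha\binom{n}{d}$, $|E_{\rm hi}|=\alpha_{\rm hi}\binom{n}{d}$, the three contributions become $\tfrac{d+1}{d}\alpha_{\rm hi}$, $(d+1)\alpha\beta$, and $\tfrac{(d+1)d}{2}\beta^2$ respectively, each up to an additive $O(n^{-1})$, which is exactly the asserted bound.

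The main conceptual step is the parity identity $F_{v_i}=\lk(v_i,\delta E_{v_i})\,\triangle\,E_{\setminus v_i}$; the ``extra'' contribution from $E_{\setminus v_i}$ is precisely what introduces the correction $-(d+1)\alpha\beta$ in the final inequality, and removing it seems to require stronger information than mere minimality of $E$. The Bonferroni overcounting produces the $-\tfrac{(d+1)d}{2}\beta^2$ loss, while all lower-order inclusion-exclusion corrections and binomial-coefficient asymptotics are absorbed into $O(n^{-1})$.
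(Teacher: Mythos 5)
Your proof is correct and follows essentially the same route as the paper: apply the basic cofilling bound to the minimal links $\lk(v_i,E)$, pass from $\delta\lk(v_i,E)$ to the simplices of $\delta E$ through $v_i$ (your symmetric-difference identity is a clean restatement of the paper's observation that $e\in\delta L_v$, $v\notin e$, $e\notin E$ forces $e\cup\{v\}\in\delta E$, together with the disjoint decomposition in~(\ref{e:link-cobo})), and then control the overcounting by a $\binom{r}{2}\binom{n-2}{d-1}$ term. The paper disjointifies sequentially (counting each $f$ at its first high-degree vertex) while you invoke Bonferroni, but the correction term is identical, so both yield the same bound.
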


\heading{Proof. } Let $v\in V$ be a vertex, and let
us write $L_v:=\lk(v,E)$.
Then, by Observation~\ref{o:minlink}, $L_v$ is minimal,
and thus $\|\delta L_v\|\ge \|L_v\|$ by the basic bound on 
$\varphi_{d-1}$. In terms of cardinalities, we
can write this inequality as
$|\delta L_v|\ge \frac{n-d+1}d|L_v|$.

Let us consider some $e\in \delta L_v$. We observe that
if $v\not\in e$ and $e\not\in E$, then $e\cup\{v\}\in\delta E$.
There are exactly $|L_v|$ sets $e\in \delta L_v$ that contain $v$,
and so  the number of $f\in \delta E$ that contain $v$
is at least
\begin{equation}\label{e:overcount}
\frac {n-d+1}d |L_v|-|L_v|-|E|. 
\end{equation}

To prove the lemma, we would like to sum this bound over
$v=v_1,v_2,\ldots,v_r$, but in this way, one $f\in\delta E$
might be counted several times. In order to avoid this,
for each $v_i$ we will count only those $f\in \delta E$ that
contain $v_i$ \emph{and avoid $v_1,\ldots,v_{i-1}$}.

In this way, from the term (\ref{e:overcount}) for $v=v_i$ we
need to subtract the number of $f\in \delta E$ that contain
both $v_i$ and some $v_j$, $j<i$. A trivial upper bound
on this number is $(i-1){n-2\choose d-1}$.
Hence
\begin{eqnarray*}
|F_{\rm hi}|&\ge& \sum_{i=1}^r\left((\tfrac{n-d+1}d-1)|L_{v_i}|- (i-1){n-2\choose d-1}
-|E|\right)\\
&\ge&
\frac nd \biggl(\sum_{i=1}^r |L_{v_i}|\biggr)- \frac{\beta^2}2 n^2
{n\choose d-1} - \beta n|E|-
O(n^d).
\end{eqnarray*}
Now $\sum_{i=1}^r |L_{v_i}|\ge |E_{\rm hi}|$. 
We finally divide by $n\choose d+1$ in order to pass to the normalized
size measure $\|.\|$, and we obtain the lemma.
\proofend

%Another observation is that, by minimality,
%$E$ may really occupy at most half of each $\delta \lk(v,E)$, 
%since the latter is a coboundary.
%
%At least it may point towards a better example: in a good example,
%for a typical $v$, a significant part of $\delta\lk(v,E)$
%should also be in $E$. Perhaps this can't quite happen?

\heading{Dealing with low-degree vertices. }
Next, we will show that if the vertex degrees of $E$
do not exceed the average vertex degree by too much,
then $\delta E$ is even significantly larger than in
the upper bound example from Proposition~\ref{p:ubbb}.
Here it is important for the argument that we deal with triples.

In this case, we are going to count the sets of the coboundary
using two-term inclusion-exclusion, similar to the case $d=2$
in the preceding section. This leads to bounding from above
the sum of squares of the degrees of \emph{pairs} of vertices.
For this, by a suitable double counting, we use the assumption of low 
\emph{vertex} degrees, and also the fact that  the degrees
of all pairs are bounded by $\frac n2$, which follows
from the minimality of~$E$.

\begin{lemma}\label{l:low3}
Let $d=3$ and let $E\subseteq {V\choose 3}$.
Suppose that $\deg_E(p)\le \frac n2$ for
each pair $p=\{u,v\}$ of vertices, and that
$\deg_E(v)\le \sigma{n\choose 2}$ for each vertex~$v$. 
Then
$$
\|\delta E\|\ge (2-O(\sigma^{1/3}))\|E\|.
$$
\end{lemma}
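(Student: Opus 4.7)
My plan is to mimic the $d=2$ argument of Section~\ref{sec:cofilling-2} one dimension higher, with sums of squared pair degrees playing the role that sums of squared vertex degrees played there.

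First, I would derive the $d=3$ analog of the two-term inclusion-exclusion formula~(\ref{e:pieform}). For $f\in\binom{V}{4}$ let $m_f:=|\{e\in E:e\subset f\}|$, and let $m_i$ denote the number of $4$-tuples with $m_f=i$. The double countings $\sum_f m_f=(n-3)|E|$ and $\sum_f\binom{m_f}{2}=\tfrac12\sum_p\deg_E(p)(\deg_E(p)-1)$, together with the elementary check that $[m_f\text{ odd}]$ equals $m_f-2\binom{m_f}{2}$ for $m_f\in\{0,1,2\}$ and exceeds it by $4$ or $8$ when $m_f=3$ or $4$, yield
\[
|\delta E| = (n-3)|E| - \sum_{p\in\binom{V}{2}}\deg_E(p)\bigl(\deg_E(p)-1\bigr) + 4m_3 + 8m_4 \;\ge\; n|E| - \sum_p\deg_E(p)^2.
\]
Normalizing by $\binom{n}{4}$, the claimed bound $\|\delta E\|\ge(2-O(\sigma^{1/3}))\|E\|$ follows provided
\begin{equation}\label{e:L26goal}
\sum_p\deg_E(p)^2 \;\le\; \tfrac{n|E|}{2}\bigl(1+O(\sigma^{1/3})\bigr).
\end{equation}

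Second, I would establish~(\ref{e:L26goal}) by a threshold argument combining the pair bound $\deg_E(p)\le n/2$ with the vertex bound $\deg_E(v)\le\sigma\binom{n}{2}$. Fix a threshold $T=cn\sigma^{1/3}$ and call a pair $p$ \emph{light} if $\deg_E(p)<T$, and \emph{heavy} otherwise. Light pairs contribute at most $T\sum_p\deg_E(p)=3T|E|$. For heavy pairs, Markov's inequality at each vertex $v$ shows that at most $2\deg_E(v)/T\le 2\sigma\binom{n}{2}/T$ of the incident pairs are heavy, giving $O(\sigma n^3/T)$ heavy pairs in total; coupled with $\deg_E(p)\le n/2$, the heavy contribution to $\sum_p\deg_E(p)^2$ is $O(\sigma n^5/T)$. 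Choosing $T$ so as to balance these two terms produces an error of the order $O(\sigma^{1/3})\cdot n|E|$ in~(\ref{e:L26goal}).

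The hard part will be obtaining the \emph{correct} leading coefficient $\tfrac12$ in~(\ref{e:L26goal}): the naive double counting using only $\deg_E(p)\le n/2$ gives $\sum_p\deg_E(p)^2\le\tfrac{3}{2}n|E|$, which is off by a factor of $3$, and the threshold argument above only saves part of this factor. To close the remaining gap one presumably also has to exploit the positive triangle-type correction $4m_3+8m_4$---the observation being that whenever $\sum_p\deg_E(p)^2$ is close to its naive maximum, many pair degrees must be close to $n/2$, which forces many pairs of triples in $E$ to share a pair and hence eventually many $4$-tuples with $m_f\ge 3$. Quantifying this, with the pair bound $\deg_E(p)\le n/2$ playing the role of the $d=2$ minimality condition in Lemma~\ref{l:lobo2} and with the exponent $\sigma^{1/3}$ emerging from optimizing the threshold, is the delicate combinatorial part of the argument.
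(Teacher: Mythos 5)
The skeleton of your plan — two-term inclusion--exclusion to reduce the problem to bounding $\sum_p\deg_E(p)^2$, then a light/heavy split on pair degrees with threshold tuned to $n\sigma^{1/3}$ — matches the paper's proof. But the step where you bound the heavy contribution is too crude to give the required leading constant, and your own diagnosis of how to fix it points in the wrong direction.

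Concretely, you bound the heavy contribution by
(number of heavy pairs)$\times (n/2)^2$, i.e. by $O(\sigma n^5/T)=O(\sigma^{2/3}n^4)$ for $T=\Theta(n\sigma^{1/3})$. This cannot be absorbed into $\frac{n|E|}{2}(1+O(\sigma^{1/3}))$: since the vertex-degree hypothesis forces $|E|\le \sigma\binom{n}{3}\cdot\frac{n}{3}/n=O(\sigma n^3)$ (indeed $\|E\|\le\sigma$), one would need $\sigma^{2/3}n^4 \lesssim \sigma^{1/3}\cdot n|E| \lesssim \sigma^{4/3}n^4$, i.e. $\sigma\gtrsim 1$, which fails in the relevant regime of small $\sigma$. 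So the threshold argument as you have written it does not produce~(\ref{e:L26goal}), and you correctly suspect a gap but misidentify its source. The missing idea is not the triangle-type correction $4m_3+8m_4$ — the paper discards those terms entirely and works with $|\delta E|\ge (n-3)|E|-\sum_p\deg_E(p)^2$ just as you do — but rather a {\it per-triple} decomposition of the heavy contribution. Write $\sum_{p\text{ heavy}}\deg_E(p)^2\le \tfrac{n}{2}\sum_{p\text{ heavy}}\deg_E(p)=\tfrac n2\sum_{e\in E}k_e$, where $k_e\in\{0,1,2,3\}$ is the number of heavy pairs contained in $e$; then show that triples with $k_e\ge 2$ are rare. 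This is where the vertex-degree hypothesis actually enters: in the link graph $G_v=(V,\lk(v,E))$ there are at most $\deg_E(v)/(\tau n)\le \sigma n/\tau$ heavy vertices, so at most $O((\sigma n/\tau)^2)\le O(\sigma/\tau^2)|\lk(v,E)|$ edges of $G_v$ join two heavy vertices, and summing over $v$ gives $|\{e:k_e\ge 2\}|=O(\sigma/\tau^2)|E|$. With $\tau=\sigma^{1/3}$ this error term is $O(\sigma^{1/3})|E|$, yielding $\sum_p\deg_E(p)^2\le \tfrac n2|E|(1+O(\sigma^{1/3}))$ as desired. Your Markov bound on the number of heavy pairs per vertex is closely related, but it counts pairs rather than triples with two heavy pairs, and the final multiplication by $(n/2)^2$ rather than by $\tfrac n2 \deg_E(p)$ is what loses contact with $|E|$.
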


\heading{Proof. } Similar to the graph case, we will count
only those $4$-tuples in $\delta E$ that contain exactly
one $e\in E$. For each $e\in E$, we count $n-d$ potential
$4$-tuples, and we subtract $1$ for each $e'\in E$ sharing
a pair with $e$. Thus,
$$
|\delta E|\ge (n-d)|E|-\sum_{p\in {V\choose2}} \deg_E(p)^2.
$$
We need to estimate the second term.

We choose a threshold parameter $\tau$, which we think
of as being much larger than $\sigma$ but still small,
and we call a pair $p$ \emph{heavy} if $\deg_E(p)\ge \tau n$,
and \emph{light} otherwise.

Each $e\in E$ shares a light pair with at most $3\tau n$ other $e'\in E$,
and so the contribution of light pairs is bounded as follows:
$$
\sum_{p \mathrm{~light}} \deg_E(p)^2 \le 3\tau n |E|.
$$

Let $E_k$ be the number of $e\in E$ with exactly $k$ heavy pairs,
$k=0,1,2,3$. Since each heavy pair has degree at most $\frac n2$,
reasoning as above, we can bound the contribution of the heavy
pairs as
$$
\sum_{p \mathrm{~heavy}} \deg_E(p)^2 \le \frac n2 (|E_1|+2|E_2|+3|E_3|).
$$
We aim at showing that $E_2\cup E_3$ is small.

Let us consider a vertex $v\in V$ and see how many 
$e\in E_2\cup E_3$ can be incident to it. More precisely,
we want to estimate $m_v$, the number of $e\in E_2\cup E_3$
that have two heavy pairs incident
to~$v$.

Let $\deg_E(v)=\sigma_v {n\choose 2}$,
where $\sigma_v\le\sigma$, and let us consider the graph $G_v:=(V,\lk(v,E))$.
The heavy pairs incident to $v$ correspond to the \emph{heavy vertices}
of $G_v$, i.e., vertices of degree at least $\tau n$, and $m_v$ is
the number of pairs in $G_v$ connecting
two heavy vertices. By simple counting, $G_v$ has at most
$\frac{\sigma_v n}\tau$ heavy vertices, and thus
$m_v\le (\frac{\sigma_v n}\tau)^2/2 \le \frac{\sigma}{\tau^2} |\lk(v,E)|$.

Summing over all vertices $v$, we have
$|E_2+E_3|\le \frac{3\sigma}{\tau^2} |E|$.
Altogether we thus have
$$
\sum_{p\in {V\choose 2}} \deg_E(p)^2\le 3\tau n |E| +
\frac n2 |E|+ O(\tfrac {\sigma}{\tau^2})n|E|.
$$
Finally, setting $\tau := \sigma^{1/3}$ and normalizing 
by ${n\choose 3}$, we obtain the claim of the lemma.
\proofend

\begin{proof}[Proof of Theorem~\ref{t:3ub}]
This is a straightforward consequence of Lemmas~\ref{l:highdeg} and~\ref{l:low3}. We consider a minimal $E\subseteq {V\choose 3}$
with $\|E\|=\alpha$. We enumerate the vertices of $V$ 
as $v_1,\ldots,v_n$ in the order of decreasing degrees.
For a suitable parameter $\beta>\alpha$ (depending on $\alpha$),
we set $r:=\beta n$, we let $E_{\rm hi}$ be those $e\in E$
that contain a vertex among $v_1,\ldots,v_r$, and let
$E_{\rm lo}:=E\setminus E_{\rm hi}$. 

We have $\sum_{i=1}^n\deg_E(v_i) = 3|E|$, and so
the degrees of the vertices $v_r,v_{r+1},\ldots,v_n$
are bounded from above by $\frac{3|E|}r$.
 Hence Lemma~\ref{l:low3} with $\sigma:= \alpha/\beta$
gives $\|\delta E_{\rm lo}\|\ge (2-(\alpha/\beta)^{1/3}) \alpha_{\rm lo}$,
where $\alpha_{\rm lo}:=\|E_{\rm lo}\|$.

Let $F_{\rm hi}$ be the set of those $f\in \delta E$ that
contain a vertex among $v_1,\ldots,v_r$; then Lemma~\ref{l:highdeg}
yields $\|F_{\rm hi}\|\ge \frac 43\alpha_{\rm hi} - O(\beta^2)$
(the $\alpha\beta$ term in the lemma is insignificant since
we assume $\alpha\le\beta$).

We now observe that if some $f\in\delta E_{\rm lo}$ does not
contain any of $v_1,\ldots,v_r$, then it belongs to
$\delta E$ (since it cannot contain
any $e\in E_{\rm hi}$). The number of $f\in \delta E_{\rm lo}$
that do contain some vertex among $v_1,\ldots,v_r$
is bounded by $r |E_{\rm lo}|$. Altogether we thus have
\begin{eqnarray*}
\|\delta E\| &\ge & \|F_{\rm hi}\|+\|\delta E_{\rm lo}\|-O(\beta\alpha_{\rm lo})\\
&\ge &\frac 43\alpha_{\rm hi} - O(\beta^2) 
+\left(2-(\alpha/\beta)^{1/3}\right) \alpha_{\rm lo}\\
&\ge& \frac 43\alpha -O(\beta^2) + \left(\frac 23 -
O((\alpha/\beta)^{1/3})\right)\alpha_{\rm lo}.
\end{eqnarray*}
If we set $\beta := C\alpha$ for a sufficiently large constant $C$,
then the term $O((\alpha/\beta)^{1/3})$ becomes smaller than
$\frac23$, and the whole term involving 
$\alpha_{\rm lo}$ is nonnegative.
Thus, we are left with $\|\delta E\|\ge \frac 43\alpha -O(\alpha^2)$
as claimed.
\end{proof}

\section{Pagodas and a Better Bound On  $\boldsymbol{c_3}$}
\label{sec:c3}

We recall the lower bound for the B\'ar\'any constant $c_d$ from
from Proposition~\ref{prop:GromovFillingBaranyConstant}:
\begin{equation}\label{e:grr}
c_d\ge \varphi_{d}(\tfrac12 \varphi_{d-1}(\tfrac13 \varphi_{d-2}(\ldots \tfrac 1d \varphi_1(\tfrac1{d+1})\ldots))),
\end{equation}
As a case study, we will concentrate on $c_3$,
the first open case. The various numerical bounds are as follows.
\begin{itemize}
\item Gromov's bound, obtained from (\ref{e:grr}) via
the basic cofilling bound and the precise value of $\varphi_1$,
is 
$$
c_3\ge \frac1{16}= 0.0625.
$$
\item For comparison, the best lower achieved by other methods,
due to Basit et al.~\cite{Basit-al}, is
$$c_3\ge 0.05448.$$
\item With \emph{maximal optimism}, assuming that the upper bounds
of Proposition~\ref{p:ubbb} are tight for $\varphi_2$ 
and $\varphi_3$, (\ref{e:grr}) would give
$$
c_3\conjge  0.0877695.
$$
\item However, the best upper bound, which we suspect to be the
truth, only gives
$$
c_3\le 4!/4^4=\tfrac 3{32}= 0.09375.
$$
\end{itemize}

Here we will show how the lower bound for $c_3$ can be improved
\emph{beyond} (\ref{e:grr}). The specific number we achieve is not
very impressive: $c_3\ge 0.06332$. However, it is important
that the proof relies only on the basic cofilling bounds on
$\varphi_2$ and $\varphi_3$. \emph{If} better lower bounds
on $\varphi_2$ or $\varphi_3$ could be proved in suitable ranges
of $\alpha$, which would improve the lower bound (\ref{e:grr}),
we would automatically get a further (slight) improvement 
from the proof below; in this sense, the method is ``orthogonal''
to bounds on the cofilling profiles.

We begin by returning to the argument in Section~\ref{s:coning}
that proves (\ref{e:grr}), and for simplicity, we specialize to the $d=3$ case.
In that argument, we considered a tetrahedron $wxyz$ in the triangulation
$\mathcal{T}$, with the corresponding set systems (cochains) 
$F_A\in {V\choose 5-|A|}$  for all nonempty sets $A\subseteq S:=\{w,x,y,z\}$.
We have  $\|F_A\|=o(1)$ unless $|A|=1$.
We also produced the set systems $F_{Ao}\in {V\choose 4-|A|}$ 
satisfying the relations
\begin{equation}\label{e:eqq1}
\delta F_{Ao}=F_A+\sum_B F_{Bo},
\end{equation}
where the sum is over all $B\subset A$ of size $|A|-1$.
Moreover, the crux of the argument was the existence
of a tetrahedron $wxyz$ for which we also have
\begin{equation}\label{e:eqq2}
F_S+\sum_{B\in {S\choose 3}} F_{Bo}=V.
\end{equation}

We introduce the notation $X\approx Y$ for sets $X,Y$ of
$k$-tuples, meaning that $|X+Y|=o(n^{k})$, or in other words, $\|X+Y\|=o(1)$.
We eliminate the sets $F_A$ with $|A|\ne 1$
from our considerations, since they are all small;
then (\ref{e:eqq1}) and (\ref{e:eqq2}) become $\approx$
relations among the various $F_{Ao}$. 

We introduce a definition reflecting these relations;
we chose to call the resulting object, a structure
made of cochains of various dimensions,  a \emph{pagoda}.
In order to make the notation more intuitive, 
we distinguish sets of various cardinalities
by different letters (with indices), writing $V$ for
sets of vertices (0-cochains), $E$ for sets of edges (1-cochains),
$F$ for sets of triples (2-cochains), and $G$ for sets of fourtuples.
We also change the
indexing of the sets into an (isomorphic but) more convenient one.
This leads to the following definition.

\begin{sloppypar}
\begin{definition}
A ($3$-dimensional) \emph{pagoda} over $V$ consists of
vertex sets $V_1,V_2,V_3,V_4\subseteq V$, edge sets
$E_{12},E_{13},\ldots,E_{34}\subseteq{V\choose 2}$,
sets $F_{123}$, $F_{124}$, $F_{134}$, $F_{234} \subseteq{V\choose 3}$ 
of triples, and a set $G=G_{1234}\subseteq {V\choose 4}$ of $4$-tuples
(the \emph{top} of the pagoda). 
The sets $V_i$, $E_{ij}$ and $F_{ijk}$ are \emph{minimal} 
and they satisfy the following relations 
(here $i,j,k$ denote mutually distinct indices):
$$
V_1+V_2+V_3+V_4\approx V,\ 
\delta V_i\approx \sum_j E_{ij},\ 
\delta E_{ij} \approx \sum_k F_{ijk},\ 
\delta F_{ijk}\approx G.
$$
\end{definition}
\end{sloppypar}

Thus, the argument of Proposition~\ref{prop:GromovFillingBaranyConstant}
shows that $c_3\ge \ctop_3\ge \lim\inf_{|V|\to\infty}\min \|G\|$,
 where the minimum is over tops $G$ of pagodas over~$V$.

We know of \emph{no example} of a pagoda
whose top is smaller than the best known upper bound for $c_3$,
i.e., $\|G\|=\tfrac 3{32}$.
So it is possible that the value of $c_3$ can be determined precisely
using a combinatorial analysis of pagodas. Unfortunately,
we have only a much weaker result.

\begin{prop}\label{p:c3bitbetter} The top $G$ of every pagoda
satisfies, for all $n$ sufficiently large, $\|G\|\ge\frac 1{16}+\eps_0$,
for a positive constant $\eps_0> 0.00082$. Consequently,
$c_3\ge \ctop_3\ge  \frac 1{16}+\eps_0> 0.06332$.
\end{prop}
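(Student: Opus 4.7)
\medskip
\noindent \textit{Proof proposal.}
The plan is to upgrade the linear programming argument underlying the basic bound $g := \|G\| \ge 1/16$ by exploiting the rigid structure that is forced on any nearly-tight pagoda. Write $v_i = \|V_i\|$, $e_{ij} = \|E_{ij}\|$, $f_{ijk} = \|F_{ijk}\|$, and $g = \|G\|$. The defining $\approx$-relations of the pagoda, combined with the basic cofilling bound of Lemma~\ref{lem:basic} and the exact formula $\varphi_1(\alpha) = 2\alpha(1-\alpha)$, give the inequality system
$$
f_{ijk} \le g, \quad e_{ij} \le \|\delta E_{ij}\| \le f_{ijk}+f_{ij\ell}, \quad 2v_i(1-v_i) \le \|\delta V_i\| \le \sum_{j\ne i}e_{ij}, \quad \sum_i v_i \ge 1.
$$
A routine LP computation yields $g \ge 1/16$, with equality forcing $v_i = 1/4$, $e_{ij} = 1/8$, $f_{ijk} = 1/16$, $\|\delta E_{ij}\| = 1/8$ and $\|\delta V_i\| = 3/8$ for all indices, together with the rigidity that the $V_i$'s partition $V$ up to $o(n)$ vertices, the three sets $E_{ij}$ for fixed $i$ partition $\delta V_i$ up to $o(n^2)$ edges, and the two sets $F_{ijk}, F_{ij\ell}$ partition $\delta E_{ij}$ up to $o(n^3)$ triples.

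The first key observation is that this tight configuration is combinatorially inconsistent. Disjointness of $E_{ij}, E_{ik}, E_{i\ell}$ inside $\delta V_i$ and of $E_{ij}, E_{jk}, E_{j\ell}$ inside $\delta V_j$ forces every edge of $E_{ij}$ to have exactly one endpoint in $V_i$ and one in $V_j$, so that $E_{ij} \subseteq V_i \times V_j$; since $|E_{ij}| = n^2/16 = |V_i|\cdot|V_j|$ up to $o(n^2)$, in fact $E_{ij} = V_i \times V_j$ up to $o(n^2)$. A direct case analysis of $\delta(V_i \times V_j)$ shows that its elements are precisely the triples having exactly one vertex in each of $V_i$, $V_j$, and one of the two remaining classes, which gives $\|\delta(V_i \times V_j)\| = 3/16+o(1)$. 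But the LP constraint $\|\delta E_{ij}\| \le 2g$ then forces $g \ge 3/32$, strictly larger than $1/16$: a contradiction.

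To extract a quantitative improvement from this contradiction, I would run a stability version of the above. Assume $g = 1/16 + \eta$ with $\eta > 0$ small; the LP then forces all quantities $v_i, e_{ij}, f_{ijk}$ within $O(\eta)$ of their tight values, and the rigidity just described holds up to overlaps of size $O(\eta)$. The inclusion--exclusion identity~(\ref{e:pieform}) of Section~\ref{sec:cofilling-2}, applied to each $E_{ij}$, together with the full minimality condition (pair-degree at most $n/2$ on every singleton and, more generally, cut-density below a half), gives a robust lower bound $\|\delta E_{ij}\| \ge 3/16 - C\eta$ for some absolute constant $C$, essentially by transporting the degree-sequence analysis of Lemma~\ref{l:lobo2} to this almost-partitioned setting. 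Combining this with the LP upper bound $\|\delta E_{ij}\| \le 2g + o(1) = 1/8 + 2\eta + o(1)$ yields $1/16 \le (C+2)\eta$, and a careful bookkeeping of the constants involved---minimising the resulting non-linear optimisation problem numerically---should recover the announced bound $\epsilon_0 > 0.00082$.

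The main difficulty will be the stability analysis: the pagoda relations hold only up to $o(1)$, and these error terms must be tracked through all four levels (vertex, edge, triple, 4-tuple) without being absorbed into the gain. A further subtlety is that the argument must handle the six $E_{ij}$'s and the four $V_i$'s simultaneously, since a small perturbation of any one set can be redistributed among the others; the extremal configuration is therefore a saddle point of a genuinely multivariable optimisation, and it is here that the explicit numerical constant $0.00082$ will emerge.
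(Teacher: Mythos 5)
Your outline captures the paper's central structural insight: in a nearly-tight pagoda the $V_i$ essentially partition $V$ into quarters, the $E_{ij}$ are nearly complete bipartite graphs between classes of size $\approx n/4$, and the coboundary of such a graph has normalized size $\approx 3/16$, contradicting $\|\delta E_{ij}\|\le 2\|G\|\approx 1/8$. This is exactly the contradiction the paper exploits. However, the stability step is the entire content of the proposition and is left as a sketch, and the tool you propose for it does not quite fit. Lemma~\ref{l:lobo2} concerns maximizing $\sum_v\deg_E(v)^2$ subject only to $\|E\|=\alpha$ and $\deg_E(v)\le n/2$; at $\alpha=1/8$ it (through Theorem~\ref{thm:cofilling-2}) yields roughly $\varphi_2(1/8)\approx 0.11$, far below the $3/16$ you need. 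The number $3/16$ does not come from minimality or cut-density alone — it comes from the fact that $E_{12}$ is nearly bipartite between two sets of size $\approx n/4$, so its degrees are bounded near $n/4$, and that is what suppresses $\sum\deg^2$ to $\approx n^3/32$ in (\ref{e:pieform}). Your proposal does not identify this as the operative constraint and instead points to the generic minimality condition, which is insufficient.

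The paper executes the stability analysis by a direct counting that sidesteps (\ref{e:pieform}) and Lemma~\ref{l:lobo2} altogether: it isolates $E^{\rm dbl}_{ij}$ (edges crossing both $V_i$ and $V_j$) and $E^{\rm sgl}_{ij}$, finds a pair with $\|E^{\rm dbl}_{12}\|$ close to $\frac18$ and $\|E^{\rm sgl}_{12}\|$ small, further splits $E^{\rm dbl}_{12}$ into a genuinely bipartite part $A$ and a remainder $B$ bounded by the overlap $\|V_1\cap V_2\|$ via the vertex-degree consequence of minimality, and then counts candidate triples $\{u,v,w\}$ with $\{u,v\}\in A$ and $w\notin V_1\cup V_2$, observing that such a triple fails to be in $\delta E_{12}$ only if one of $\{u,w\},\{v,w\}$ lies in the small set $E^{\rm sgl}_{12}$. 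Tracking $\eps_1,\eps_2,f(\eps_0)$ through this chain is precisely where the explicit constant $0.00082$ arises, and your proposal, by its own admission, defers all of that to a hoped-for ``careful bookkeeping'' and a numerical optimisation that is never set up. As written there is a genuine gap: the quantitative heart of the proof is missing, and the one concrete technique you nominate for filling it would not, on its own, deliver the required $3/16-C\eta$ lower bound.
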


A similar, but 
more complicated, argument probably also works for higher-dimensional
pagodas.

\heading{Remark. }
% If one could prove improved lower bounds
%on the {\cofilling} profiles $\varphi_2$ and/or $\varphi_3$, which would lead
%to an improvement in Gromov's lower bound (\ref{e:grr}),
%the proof method of Proposition~\ref{p:c3bitbetter} would again
%provide a slight improvement over (\ref{e:grr}).
%
Developing the ideas from the forthcoming
proof of Proposition~\ref{p:c3bitbetter}, one can set up a rather
complicated optimization problem, whose optimum provides
a lower bound for $\ctop_3$. However, solving this (non-convex) 
optimization problem rigorously
seems rather difficult. Numerical computations
indicate that the optimum is approximately
$0.0703125$. This would be an improvement much more significant
than the one in Proposition~\ref{p:c3bitbetter}, but still
far from the suspected true value of~$c_3$.

\heading{Proof of Proposition~\ref{p:c3bitbetter}. }
We being by an outline of the argument.
We consider a pagoda 
with $\|G\|\le \frac 1{16}+\eps_0$, where $\eps_0\ge 0$
is a yet unspecified
(small) parameter. Reasoning essentially as in the
derivation of  (\ref{e:grr}) and
using the basic bound for $\varphi_3$ and $\varphi_2$
and the true value of $\varphi_1$, we find that
no $\|E_{ij}\|$ and no $\|\delta E_{ij}\|$
may be significantly larger than $\frac 18$, and
no $V_i$ may occupy much  more than $\frac14$
of the vertex set. Hence the $V_i$ are almost disjoint
and have sizes close to $\frac14$. Then one can argue that almost
all edges of $E_{12}$, say, have to go between $V_1$ and $V_2$,
as in the following picture:
\immfig{triangletrick0}
Then, however, the coboundary $\delta E_{12}$ contains almost all
triples of the form indicated in the picture, and thus
$\|\delta E_{12}\|$ is close to $\frac3{16}$, rather than
to $\frac18$, which is a contradiction showing that
$\eps_0$ cannot be taken arbitrarily small.

Now we proceed with a more detailed and more quantitative argument.
Since $\|G\|\le \frac1{16}+\eps_0$ and $G\approx\delta F_{ijk}$,
 we have $\|F_{ijk}\|\le \frac 1{16}+\eps_0$ according to the basic
bound for $\varphi_3$ (ignoring, for simplicity, the $o(1)$ terms
coming from $\approx$). Further, since each $\delta E_{ij}$
is a sum of two $F_{ijk}$'s, the basic bound for $\varphi_2$
gives 
$$
\|E_{ij}\|\le \tfrac 18+2\eps_0,\ \ \ 1\le i<j\le 4.
$$
Similarly, since each $\delta V_i$ is the sum of three
$E_{ij}$'s, and since $\varphi_1(\alpha)=2\alpha(1-\alpha)$,
we get
\begin{equation}\label{e:Viub}
\|V_i\|\le \tfrac 14+\eps_1,
\end{equation}
where $\eps_1=\frac14\left(1-\sqrt{1-48\eps_0}\right)=
6\eps_0+72\eps_0^2+O(\eps_0^3)$ 
is given by the equation $\varphi_1(\frac14+\eps_1)=3(\frac 18+2\eps_0)$.

Next, let $E^{\rm dbl}_{ij}$
be the set of edges $e\in E_{ij}$ that belong to both 
$\delta V_i$ and $\delta V_j$, and let $E^{\rm sgl}_{ij}$
consist of those $e\in E_{ij}$ that belong to exactly
one of $\delta V_i,\delta V_j$. 

Using $\delta V_i\approx \sum_{j} E_{ij}$ and summing
over $i=1,\ldots,4$, we have
\begin{equation}\label{e:sgldbl}
\sum_{i=1}^4\|\delta V_i\|\le 2\sum_{i<j}\|E^{\rm dbl}_{ij}\|
+ \sum_{i<j}\|E^{\rm sgl}_{ij}\|.
\end{equation}
For every $i$, we have, using (\ref{e:Viub}),
$$
\|V_i\|\ge 1-\sum_{j\ne i}\|V_j\|\ge
1-3(\tfrac 14+\eps_1)\ge \tfrac 14-3\eps_1.
$$
Hence the left-hand side of (\ref{e:sgldbl}) is at least
$4\varphi_1(\frac14-3\eps_1)=4(\frac 38-\eps_2)$,
where $\eps_2$   
is defined by the last equality (we get
$\eps_2=18\eps_0+864\eps_0^2+O(\eps_0^3)$).

Further, for every $i<j$ we have $\|E^{\rm dbl}_{ij}\|+\|E^{\rm sgl}_{ij}\|
\le \|E_{ij}\|\le\frac 18+2\eps_0$. Summing this inequality over 
the six pairs $i,j$ and subtracting the result from (\ref{e:sgldbl}), we arrive at
$$
4(\tfrac 38-\eps_2)-\tfrac 68-6\cdot 2\eps_0\le \sum_{i<j}\|E^{\rm dbl}_{ij}\|.
$$
Hence there are $i$ and $j$ with 
$$\|E^{\rm dbl}_{ij}\|\ge\tfrac 18-
\tfrac23\eps_2-2\eps_0.
$$
 Let us fix the notation so that $i=1$ and $j=2$ have this
property.

Using $\|E^{\rm dbl}_{12}\|+\|E^{\rm sgl}_{12}\|\le \frac 18+2\eps_0$
again, we further obtain
\begin{equation}\label{e:Esgl}
\|E^{\rm sgl}_{12}\|\le 4\eps_0+\tfrac23\eps_2.
\end{equation}

Let us divide the edges in $E^{\rm dbl}_{12}$ into two subsets
$A$ and $B$, where $A$ consists of the edges $e\in E^{\rm dbl}_{12}$
that have one endpoint in $V_1\setminus V_2$ and the
other in $V_2\setminus V_1$. Then each edge in $B:=E^{\rm dbl}_{12}
\setminus A$ necessarily connects a vertex of
$I:=V_1\cap V_2$ to a vertex in $V\setminus (V_1\cup V_2)$.

The plan is now to show that, for $\eps_0$ sufficiently small,
the edges of $A$ ``contribute'' many triples to $\delta E_{12}$.
First we check that $B$ is small, and we begin by bounding 
the size of $I$: We have $V=V_1\cup\cdots\cup V_4$,
by inclusion-exclusion we get $1\le \|V_1\|+\|V_2\|-\|I\|+
\|V_3\|+\|V_4\|$, and thus $\|I\|\le \sum_{i=1}^4\|V_i\|-1
\le 4\eps_1$. By the minimality of $E_{12}$, each vertex in $I$
has degree at most $\frac n2$, and so 
$\|B\|\le \|I\|\le 4\eps_1$. Therefore,
\begin{equation}\label{e:Abound}
\|A\|\ge \|E^{\rm dbl}_{12}\|-\|B\|\ge \tfrac18-\tfrac 23\eps_2-2\eps_0-4\eps_1.
\end{equation}

Now let us consider an edge $e=\{u,v\}\in A$ and a vertex
$w\in V\setminus (V_1\cup V_2)$. The triple $\{u,v,w\}$ belongs
to $\delta E_{12}$ unless one of the edges $\{u,w\}$ and 
$\{v,w\}$ lies in $E_{12}$; see the following illustration:
\immfig{triangletrick}
 The (normalized) number of
``candidate triples''  $\{u,v,w\}$ with $\{u,v\}\in A$
and $w\in V\setminus (V_1\cup V_2)$ is 
$3\|A\|\cdot \|V\setminus (V_1\cup V_2)\|$. Moreover, if
one of the edges $\{u,w\},\{v,w\}$ lies in $E_{12}$, then
it lies in $E^{\rm sgl}_{12}$. At the same time,
a given edge $e'\in E^{\rm sgl}_{12}$ may ``kill''
at most $\max(|V_1\setminus V_2|,|V_2\setminus V_1|)\le
(\frac 14+\eps_1)n$ candidate triples $\{u,v,w\}$.

Hence
$$%\begin{eqnarray*}
\|\delta E_{12}\|\ge 3\|A\|(\tfrac 12-2\eps_1) - 3\|E^{\rm sgl}_{12}\|(\tfrac 
14+\eps_1)\ge \tfrac 3{16}-f(\eps_0),
$$
where, employing (\ref{e:Esgl}) and (\ref{e:Abound}), we calculate that
 $f(\eps_0)= 6\eps_0+\tfrac{27}4\eps_1-24\eps_1^2+\tfrac 32\eps_2-
2\eps_1\eps_2=\tfrac{147}2\eps_0+702\eps_0^2+O(\eps_0^3)$.
Since we have assumed $\|\delta E_{12}\|\le\tfrac 18+2\eps_0$,
we finally obtain $f(\eps_0)+2\eps_0\ge\tfrac1{16}$. 
The numerical lower bound in the proposition follows from this.
\proofend

\bibliographystyle{alpha}
\bibliography{g}

\end{document}